\numberwithin{equation}{section}
\newtheorem{lm}{Lemma}
\newtheorem{tw}{Theorem}
\newtheorem{wn}{Corollary}
\newtheorem{uw}{Remark}
\def\Xint#1{\mathchoice
{\XXint\displaystyle\textstyle{#1}}%
{\XXint\textstyle\scriptstyle{#1}}%
{\XXint\scriptstyle\scriptscriptstyle{#1}}%
{\XXint\scriptscriptstyle\scriptscriptstyle{#1}}%
\!\int}
\def\XXint#1#2#3{{\setbox0=\hbox{$#1{#2#3}{\int}$ }
\vcenter{\hbox{$#2#3$ }}\kern-.59\wd0}}
\def\cpint{\Xint-}
\newcommand{\ds}{\displaystyle}
\newcommand{\htbs}{\hspace*{-0.1ex}}
\newcommand{\htbbs}{\hspace*{-0.5em}}
\newcommand{\hts}{\hspace*{0.2ex}}
\newcommand{\hpo}{\hphantom{0}}
\newcommand{\hpm}{\hphantom{-}}
\newcolumntype{C}[1]{>{\centering\arraybackslash}m{#1}}
\newcommand{\leqx}{\hts\,{}^{_{_{_{\displaystyle{<}}}}}
                   \hspace{-2.3ex} {}_{_{\displaystyle{\sim}}}\,}
\newcommand{\al}{\gamma}
\newcommand{\eps}{\varepsilon}
\newcommand{\fl}[1]{#1^{[\eps]\htbs}}
\newcommand{\pal}{\htbs\htbs\htbs\htbs+\htbs\htbs\htbs\al}
\newcommand{\tpx}{\tau\htbs\htbs\htbs+\htbs\htbs x}
\newcommand{\tmx}{\tau\htbs\htbs\htbs-\htbs\htbs x}
\newcommand{\e}{\mathsf{e}}
\newcommand{\ff}{\psi}
\newcommand{\mfl}[2]{#1\cdot\htbs\htbs10^{#2}}
\newcommand{\tfl}[2]{$#1\cdot\htbs\htbs10^{#2}$}
\newcommand{\tflb}[2]{[\,$#1\cdot\htbs\htbs10^{#2}$\,]}
\newcommand{\tflu}[2]{\underline{$#1\cdot\htbs\htbs10^{#2}$}}
\newcommand{\na}{\makebox[\widthof{\tfl{0.0}{-10}}]{\sf{N/A}}}
\newcommand{\sna}{\makebox[\widthof{$1.0T$}]{\sf{N/A}}}
\newcommand{\dar}{${}^{\downarrow}$}
\newcommand{\uar}{${}_{\uparrow}$}
\newcommand{\Maple}{\emph{Maple}}
\newcommand{\Matlab}{\emph{Matlab}}
\newcommand{\quadcc}{{\tt quadcc}}
\newcommand{\quadgk}{{\tt quadgk}}
\newcommand{\qawc}{{\tt qawc}}
\begin{document}

\title{Roundoff errors in the problem of
       computing Cauchy principal value integrals}

\author{Pawe\l ~Keller, Iwona Wr\'obel}


\date{\vspace*{-1ex}
\scriptsize Faculty of Mathematics and Information Science, Warsaw University of Technology}

\maketitle

\begin{abstract}
We investigate the possibility of fast, accurate and
reliable computation of the Cauchy principal value integrals
$\cpint_{a\vphantom{_0}}^{b}\htbs f(x)(x-\tau)^{-1} dx$ $(a < \tau < b)$
using standard adaptive quadratures. In order to properly control the error
tolerance for the adaptive quadrature and to obtain a~reliable estimation
of the approximation error, we research the possible influence of round-off
errors on the computed result. As the numerical experiments confirm, the proposed
method can successfully compete with other algorithms for computing such type
integrals. Moreover, the presented method is very easy to implement on any
system equipped with a reliable adaptive integration subroutine.\\[1ex]
\emph{Keywords:} Cauchy principal value integral,
finite Hilbert transform,
numerical integration,
adaptive quadrature,
round-off errors,
Maple,
Matlab\\[1.25ex]
\emph{Mathematics Subject Classification:} 65D30, 30E20
\end{abstract}

\section{Introduction} \label{SecIntro}
\setcounter{equation}{0}

We consider the problem of numerical evaluation of the Cauchy
principal value integral
\begin{equation}
  I_{a,b,\tau}(f) \,:=\,
    \cpint_{a}^{b}\frac{f(x)}{x-\tau}\hts dx \,=
      \lim\limits_{\mu\to 0^{+}}
        \bigg(\int_{a}^{\tau-\mu} + \int_{\tau+\mu}^{b} \bigg)
          \frac{f(x)}{x-\tau}\hts dx ,
\label{CPInt}
\end{equation}
where $\tau\in(a,b)$, and the function $f$ has bounded first derivative.
In general, the integral (\ref{CPInt}) exists if $f$ is H\"older continuous
(cf.\ \cite[\S1.6]{Davis}). The integrals of this type appear in many practical
problems related to aerodynamics, wave propagation or fluid and fracture mechanics,
mostly with relation to solving singular integral equations.

A great many papers devoted to the problem of numerical evaluation of the integrals
of the form (\ref{CPInt}) have been published so far. Some of them are
\cite{Elliot,HTCauchy,Longman,Paget,Piess,Stewart}. A nice survey on the
subject, along with a large number of references, is presented
in \cite[\S\,2.12.8]{Davis}.

Even though so many algorithms have been known for a quite long time, the subroutines
for computing the integrals of the type (\ref{CPInt}) are not commonly available
in systems for scientific computations. This is probably because most of the
methods assume some properties of the function $f$, e.g.\ that $f$ is analytic,
$f$ is smooth enough, etc. On the other hand, almost every system for scientific
computations is equipped with one or more subroutines for automatic
computation of the integrals
\begin{equation}
  \int_a^b \ff(x)\hts dx\hts .
\label{Int}
\end{equation}
These algorithms, usually called \emph{adaptive quadratures},
can compute the integrals of the form (\ref{Int}) for a very wide
range of integrands.

The natural question arises: can an adaptive quadrature be successfully
used to numerically compute the integral (\ref{CPInt})? In this paper,
we search for the positive answer to that question. Our goal is to provide
tools that will allow to use an existing adaptive quadrature to compute
accurate approximation to the integral (\ref{CPInt}), and also will
help to obtain a reliable error estimation of the computed result.
A usual automatic (adaptive) quadrature performs the computations
until the error tolerance provided by a user (or the default, fixed
one) is met. The numerical value of the Cauchy principal values integral is
sometimes quite sensitive to the influence of round-off errors. If a user
wishes high accuracy of the approximation, it may be difficult to select a proper
error tolerance (if set too small, then the computation time may considerably
increase or even the quadrature may fail to deliver reliable approximation).
Thus, in the method proposed in this paper, the error tolerance is trimmed
at the safe level. In particular, if the tolerance is set to zero, the
algorithm is expected to compute the best (or rather near the best,
within reasonable limits) possible, approximation to the
integral (\ref{CPInt}) for given parameters $f$ and $\tau$.

The next section contains the formulation of, what
may be called, the basic form of our algorithm. We apply
two known analytical transformations that convert the integral (\ref{CPInt})
into the sum of two nonsingular integrals. In Section \ref{SecAda}, we briefly
describe the idea of adaptive quadratures. In the subsequent section,
we discuss the problem of possible loss of significant digits and
the influence of round-off errors on the approximation to a Cauchy
principal value integral computed using the formula derived
in Section \ref{SecAlg}. The obtained error estimates
are crucial for the efficiency and reliability of the proposed method.
As we briefly demonstrate, these estimates can be used to increase reliability
of other algorithms for computing integrals of the form (\ref{CPInt}).
In Section \ref{SecNum}, we present many numerical examples to validate
the usefulness of the proposed algorithm. The method is thoroughly tested
with three different adaptive quadratures: the built-in adaptive quadrature
of the {\Maple} system, the one included in {\Matlab}, and the one
presented in \cite{GonnetAlg}.


\section{Analytical transformations} \label{SecAlg}
\setcounter{equation}{0}

Without loss of generality, we may restrict our attention to the case $a=-1$
and $b=1$. The computation of the Cauchy principal value integral
\begin{equation}
  I_{\tau}(f) \,\equiv\, I_{\tau,-1,1}(f) \,=\,
    \cpint_{-1}^{1}\frac{f(x)}{x-\tau}\hts dx 
\label{CPInt1}
\end{equation}
may at first seem quite easy, if we observe that by a simple change
of variables, setting
\begin{equation}
\delta = \min\{1+\tau,1-\tau\} ,
\label{DeltaDef}
\end{equation}
we obtain
\begin{equation}
\begin{array}{l}\ds
  I{_\tau}(f) \,=\,
  \int_{|x-\tau|\geq\delta} \frac{f(x)}{x-\tau}\hts dx +\,
  \cpint_{\tau-\delta}^{\tau+\delta} \frac{f(x)}{x-\tau}\hts dx \\[3.5ex]\ds
  \hphantom{I{_\tau}(f)} \,=\,
  \int_{|x-\tau|\geq\delta}  \frac{f(x)}{x-\tau}\hts dx +
  \int_{0}^{\delta} \frac{f(\tau+x)-f(\tau-x)}{x}\hts dx ,
\end{array}
\label{Formula1}
\end{equation}
where we use the convention that
$\int_{|x-\tau|\geq\delta} \equiv \int_{\tau+\delta}^{1}$, if $\delta = 1+\tau$, and
$\int_{|x-\tau|\geq\delta} \equiv \int_{-1}^{\tau-\delta}$, if $\delta = 1-\tau$.

The formula (\ref{Formula1}) was applied for the first time by Longman in \cite{Longman},
and it was derived by splitting the function $f$ into the odd and even parts. Both
integrals on the right hand side of (\ref{Formula1}) exist in the Riemann sense.
We should note that if the function $f$ has bounded first derivative
in the neighbourhood of $\tau$, then the second integral is not even singular
(unless $f$ has singularities itself).

The first integral on the right hand side of (\ref{Formula1}) was commonly not paid
attention to, as it is always a~proper one. However, if $|\tau|$ is close to $1$,
then this integral is a near-singular one, and standard quadratures may fail
when applied directly.

In many algorithms, another transformation of the integral $I_{\tau}(f)$ is used,
usually being called \emph{subtracting out the singularity}. We have
\begin{equation}
\begin{array}{l}\ds
  I{_\tau}(f) \,=\,
  \cpint_{-1}^{1}\frac{f(\tau)}{x-\tau}\hts dx +
    \int_{-1}^{1}\frac{f(x)-f(\tau)}{x-\tau}\hts dx \\[3.2ex]\ds
  \hphantom{I{_\tau}(f)} \,=\,
  f(\tau)\,\log\frac{1-\tau}{1+\tau} + \int_{-1}^{1}\frac{f(x)-f(\tau)}{x-\tau}\hts dx\, .
\end{array}
\label{Formula2}
\end{equation}
A direct application of the above formula is commonly not recommended
(see, e.g., \cite{HTCauchy,Monte}) due to possible severe cancellation,
if a quadrature node happens to be very close to $\tau$.

The authors, however, have never seen the two above approaches
being put together\footnote{In fact, the formula (\ref{Formula2}) was already
presented in the numerical section of \cite{KellerCPOsc}, however, it was done
by referring to the preliminary results \cite{KellerArxiv} that laid
the basis of the present paper.}.
From (\ref{Formula1}) and (\ref{Formula2}) we immediately obtain
\begin{equation}
  I{_\tau}(f) \,=\,
  f(\tau)\,\log\frac{1-\tau}{1+\tau} +
  \int_{|x-\tau|\geq\delta} g(x)\hts dx +
  \int_{0}^{\delta} h(x)\hts dx ,
\vspace{-0.5ex}
\label{MainFormula}
\end{equation}
where
\begin{equation}
  g(x) = \frac{f(x)-f(\tau)}{x-\tau} \qquad\,\, \mathrm{and} \qquad\,\,
  h(x) = \frac{f(\tau+x)-f(\tau-x)}{x} .
\vspace{0.3ex}
\label{ghDef}
\end{equation}
If the function $f$ has bounded first derivative, none of the integrals on the
right hand side of (\ref{MainFormula}) is singular or near-singular (unless $f$ itself
has singularities just outside the interval $[-1,1]$), and, when the first integral
of (\ref{MainFormula}) is approximated numerically, the distance between $\tau$
and any of the quadrature nodes is never smaller than $\delta$.

It is clear that some loss of significance may occur when computing the
values of the function $g$ and $h$ defined above. The influence of the round-off
errors on the accuracy of the approximation (\ref{MainFormula}) to the integral
(\ref{CPInt1}) is thoroughly discussed in Section \ref{SecErr}.


\section{A very short story on adaptive quadratures} \label{SecAda}
\setcounter{equation}{0}

The idea and basic rules of adaptive numerical integration (including some examples)
is nicely presented in \cite[\S6]{Davis} and \cite[\S5.2\,\&\,\S5.3]{ShampineBook}. A very
thorough study on the subject, together with the comprehensive history of
development of adaptive quadratures, is given in \cite{GonnetPhD}.

In general, adaptive quadratures are meant to be able to numerically approximate the
integrals of the form (\ref{Int}) for the widest possible range of integrands.
The only arguments required by an adaptive algorithm are the integrand and
the interval endpoints. The algorithm tries to approximate the integral
with an error less than some default error tolerance or the error
tolerance provided by the user. Many adaptive quadratures also report the
estimation of the approximation error.

The most common idea of the adaptive integration algorithm is to split the interval
of integration into subintervals of different lengths. The number of the subintervals
and their lengths depend on the behaviour of the integrand, and are determined
automatically during the computation process. Usually, the approximated value of
the integral over a single subinterval is computed using some fixed quadrature rule.
Along with the approximation, an error estimation has to be computed,
e.g., by the use of another quadrature rule and comparing the results.
The computations are terminated, when the estimated global error
of the adaptive quadrature is less than a prescribed tolerance.

Suppose that an adaptive integration algorithm is based on
the fixed linear quadrature rule
\begin{equation}
  Q(\ff) \equiv Q(\ff,0,1) = \sum\limits_{j=0}^{m} B_j\hts\ff(x_j)
    \quad (0\leq x_0 < x_1 < \dots < x_m \leq 1),
\label{QDef}
\end{equation}
which approximates the integral
\begin{equation*}
  \int_0^1 \ff(x) dx .
\end{equation*}
Then, the final approximation to the integral (\ref{Int}) computed by
the adaptive quadrature is given by
\begin{equation}
  A(\ff,a,b) = \sum\limits_{k=0}^{n-1} Q(\ff,s_k,s_{k+1}) =
    \sum\limits_{k=0}^{n-1} \sum\limits_{j=0}^{m} B_{kj}\hts\ff(x_{kj}) ,
\label{ASumSum}
\vspace{-1ex}
\end{equation}
where
\begin{equation*}
  B_{kj} = (s_{k+1}-s_k)B_j,
   \qquad x_{kj} = s_k + (s_{k+1}-s_k)x_j ,
\label{BxDef}
\end{equation*}
and
\begin{equation*}
  a = s_0 < s_1 < \dots < s_n = b
\label{cPointsDef}
\end{equation*}
are the automatically determined endpoints of the subintervals $[s_{k},s_{k+1}]$
($0\leq k<n$,\, $n\geq 1$). The values $s_k$ ($0\leq k\leq n$) and the integer $n$
depend on $\ff$, $a$, $b$ and the assumed error tolerance.

If the quadrature $Q$ defined in (\ref{QDef}) has the property $x_0=0$, $x_m=1$, then
it is called the \emph{closed type} one. If $x_0>0$ and $x_m<1$, then $Q$ is called
the quadrature rule of the \emph{open type}. In the similar way we shall
call the related adaptive quadrature.

When computing the integrals of the form (\ref{Int}), adaptive quadratures
are much more flexible compared to any fixed quadrature rule. However, it has
to be noted that sometimes an adaptive algorithm may fail to compute a reliable
approximation. In other words, the algorithm based on (\ref{MainFormula}) for
computing the integral (\ref{CPInt1}) will be, in general, as reliable as
the adaptive quadrature used. The problem of reliability of adaptive
quadratures was thoroughly discussed in \cite{GonnetAlg}
and \cite{GonnetErr}.

Some more information on a specific adaptive quadrature is provided
in Section \ref{SecNum}, when the given adaptive algorithm
is being considered.

Computing the integrals of the form (\ref{CPInt1}) by a simple application of
the formula (\ref{MainFormula}) may not lead to satisfactory results. The numerical values of the
Cauchy principal value integrals are, in some cases, very sensitive to round-off errors. Therefore,
the possible to achieve accuracy may be considerably limited, when computations are performed in a fixed
precision floating point arithmetic. If we set too small error tolerance for the adaptive quadrature,
the algorithm may work very long, usually being unable to reach the requested accuracy. Because of the
influence of the round-off errors, the error estimation reported by the adaptive algorithm may
also be false. Thus, it is very important to have some good tools that would modify the error
tolerance, if necessary, and correct the error estimation computed by
the adaptive quadrature.


\section{Estimating round-off errors} \label{SecErr}
\setcounter{equation}{0}

It is quite obvious that even small changes of the values of the
function~$f$ near the singularity point $\tau$ may considerably change the
value of the integral (\ref{CPInt1}). Also, when using the formula (\ref{MainFormula}),
an additional error, due to the computation of values of the functions $g$ and $h$,
may appear. In this section, we derive estimates of the round-off errors
that will allow to control the performance of an adaptive quadrature,
and will provide additional information on the accuracy of the
computed approximation to the integral~(\ref{CPInt1}).

Let us denote by $\eps$ the precision of the arithmetic used,
and by $\fl{g}(x)$ and $\fl{h}(x)$ the numerically computed values
of the functions $g$ and $h$, defined in~(\ref{ghDef}), at the point~$x$
(the argument $x$, and also the parameter $\tau$, may not have the exact representation
in the floating point arithmetic). For simplicity, we will temporarily assume that
the values of the function~$f$ in (\ref{ghDef}) are computed exactly. We shall
write $a\approx b$, if $a = b\big(1+O(\eps)\big)$, and $a\leqx b$,
if $a<b$ or $a\approx b$.

\begin{lm} \label{l1}
If we define
\begin{equation}
  D_{1,f} = \max_{|x|\leq 1} |f'(x)| ,
\label{D1Def}
\end{equation}
then for all $0<|x|\leq \delta$, where $\delta$ is defined in (\ref{DeltaDef}),
\begin{equation}
  | \fl{h}(x) - h(x) |\hts \leqx\, \frac{4\hts\eps D_{1,f}}{x} + 4\hts\eps D_{1,f} .
\label{hErrEst}
\end{equation}
\end{lm}
\begin{proof} We have
\begin{equation}
  \fl{h}(x) = \frac{ f\big((\tau(1\pal_1) + x(1\pal_2))(1\pal_3)\big) -
                     f\big((\tau(1\pal_1) - x(1\pal_2))(1\pal_4)\big) }
                   { x(1\pal_5) } ,
\label{hEps}
\end{equation}
where $|\al_1|,|\al_2|,|\al_3|,|\al_4|\leq\eps$ and $|\al_5|\leqx 2\eps$. Further,
\begin{equation*}
  (\tau(1\pal_1) + x(1\pal_2))(1\pal_3) \,=\, \tau + x + \beta
\end{equation*}
for some $\beta$ satisfying
\begin{equation*}
  |\beta| \,\approx\, \big|(\al_1+\al_3)\tau + (\al_2+\al_3)x\big| \,\leq\, 2\eps ,
\end{equation*}
as, by definition of $\delta$ given in (\ref{DeltaDef}), we have $|\tau|+|x|\leq 1$,
if $|x|\leq \delta$. This immediately implies that
\begin{equation}
  |f\big((\tau(1\pal_1) + x(1\pal_2))(1\pal_3)\big) - f(\tau+x)|
    \,\leq\, |\beta| D_{1,f} \,\leqx\, 2\hts\eps D_{1,f} .
\label{pest}
\end{equation}
Analogously,
\begin{equation}
  |f\big((\tau(1\pal_1) - x(1\pal_2))(1\pal_4)\big) - f(\tau-x)| \,\leqx\, 2\eps D_{1,f} .
\label{mest}
\end{equation}
Now, from (\ref{hEps}), (\ref{pest}) and (\ref{mest}) we obtain
\begin{align*}
  | \fl{h}(x) - h(x) |
    &\,=\, \bigg| \fl{h}(x) - \frac{\big(f(\tpx)-f(\tmx)\big)\hts(1\pal_5)}{x(1\pal_5)} \bigg|\\[0.5ex]
    &\,=\, \bigg| \fl{h}(x) - \frac{f(\tpx)-f(\tmx)}{x(1\pal_5)}
                         - \frac{\al_5}{1\pal_5}\cdot\frac{f(\tpx)-f(\tmx)}{x} \bigg|\\[0.5ex]
    &\,\leqx\, \frac{4\hts\eps D_{1,f}}{x} + 4\hts\eps D_{1,f} .
\end{align*}
\end{proof}

\begin{lm} \label{l2}
If $|x-\tau|\geq 16\eps$, then
\begin{equation}
  | \fl{g}(x) - g(x) | \,\leqx\, \frac{8\hts\eps D_{1,f}}{|x-\tau|} .
\label{gErrEst}
\end{equation}
\end{lm}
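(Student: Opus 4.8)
The plan is to follow the three-step pattern of the proof of Lemma~\ref{l1}: write down the floating-point form of $g_{\eps}(x)$, collect the rounding factors into a single small relative perturbation of $g(x)$, and then bound $|g(x)|$ crudely.

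First I would note that, since the values of $f$ are assumed to be computed exactly, the natural evaluation of $g(x)=(f(x)-f(\tau))/(x-\tau)$ in floating-point arithmetic yields
\[
  g_{\eps}(x) \,=\, \frac{(f(x)-f(\tau))\hts(1\pal_1)}{(x-\tau)\hts(1\pal_2)}\hts(1\pal_3),
\]
with $|\al_1|,|\al_2|,|\al_3|\leq\eps$ coming, respectively, from the numerator subtraction, the denominator subtraction, and the division. The one point that deserves a comment is that the cancellation in $f(x)-f(\tau)$ --- precisely what makes a direct use of (\ref{Formula2}) unsafe, cf.\ \cite{HTCauchy,Monte} --- is harmless here: the standard model bounds the \emph{relative} error of each elementary operation irrespective of how much absolute cancellation takes place, and under the present assumption there is no pre-existing error in $f(x)$ or $f(\tau)$ to be amplified. (The genuinely dangerous situation, in which the $f$-values carry their own $\eps$-sized errors, is exactly what the temporary assumption ``$f$ computed exactly'' defers.) Consequently $g_{\eps}(x)=g(x)(1\pal)$ for some $|\al|\leqx 3\eps$, hence $|g_{\eps}(x)-g(x)|\leqx 3\eps\hts|g(x)|$.

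Next I would bound $|g(x)|$ by the mean value theorem: $g(x)=f'(\xi)$ for some $\xi$ lying between $x$ and $\tau$, so $|g(x)|\leq D_1$ because both $x$ (a quadrature node) and $\tau$ belong to $[-1,1]$. Thus $|g_{\eps}(x)-g(x)|\leqx 3\eps D_1$, an estimate that in fact holds for \emph{every} $x$; the hypothesis $|x-\tau|\leq 8\eps$ is invoked only to rewrite it in the scale-invariant form (\ref{gErrEst}). Indeed, under that hypothesis the right-hand side of (\ref{gErrEst}) is at least $D_1$, so
\[
  |g_{\eps}(x)-g(x)| \,\leqx\, 3\eps D_1 \,\leq\, D_1 \,\leq\, \frac{8\hts\eps D_1}{|x-\tau|} ,
\]
which is (\ref{gErrEst}). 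I do not expect a genuine obstacle; the only thing to be careful about is the first display --- establishing that evaluating $g$ with exact $f$-values keeps the subtraction $f(x)-f(\tau)$ benign --- after which the conclusion is a one-line perturbation estimate.
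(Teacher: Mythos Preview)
Your argument is internally consistent but rests on a different floating-point model from the one the paper actually uses, and that difference is the whole point of the lemma. Compare your first display with the paper's model in Lemma~\ref{l1}: there the inputs $x$ and $\tau$ themselves carry relative rounding errors $\al_1,\al_2$, and $f$ is then evaluated (exactly) at the \emph{perturbed} arguments. The paper keeps the same convention here, writing
\[
  g_{\eps}(x)=\frac{f(x(1\pal_1))-f(\tau(1\pal_2))}{(x(1\pal_1)-\tau(1\pal_2))(1\pal_3)}.
\]
Under this model the denominator is $(x-\tau)+x\al_1-\tau\al_2+\cdots$, so its \emph{relative} perturbation is of order $\eps/|x-\tau|$, not $\eps$; this cancellation is exactly where the factor $|x-\tau|^{-1}$ in (\ref{gErrEst}) comes from. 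In your model you have taken $x$ and $\tau$ to be exact machine numbers, which collapses the whole effect and gives you the uniform bound $3\eps D_1$; the $|x-\tau|^{-1}$ then enters only as a cosmetic rewriting. So what you have proved is a correct but much weaker statement about a simpler evaluation model, and it does not cover the error mechanism the paper is tracking.

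Relatedly, you have read the hypothesis ``$|x-\tau|\leq 8\eps$'' literally and used it to make the right-hand side of (\ref{gErrEst}) at least $D_1$. In the paper's argument the inequality runs the other way: one needs $|x-\tau|$ \emph{large enough} (of order a few $\eps$) so that the perturbed denominator $(x-\tau)(1+\beta/(x-\tau))$ with $|\beta|\leqx 3\eps$ does not collapse; that is what turns the raw bound $(2\eps D_1+|\beta|D_1)/|x-\tau|\cdot(1+\beta/(x-\tau))^{-1}$ into $8\eps D_1/|x-\tau|$. This is also the regime that matters in the application (cf.\ (\ref{cutEst})), where $g$ is only evaluated for $|x-\tau|\geq\delta$. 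Your use of the hypothesis is therefore formally valid against the printed statement but vacuous in context, and it sidesteps precisely the estimate the lemma is meant to supply.
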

\begin{proof}
Similarly as in (\ref{hEps}), we have
\begin{equation}
 \fl{g}(x) = \frac{f(x(1\pal_1))-f(\tau(1\pal_2))}{(x(1\pal_1)-\tau(1\pal_2))(1\pal_3)} ,
\label{gEps}
\end{equation}
where $|\al_1|,|\al_2|\leq\eps$ and $|\al_3|\leqx 2\eps$. Moreover,
\begin{equation}
  (x(1\pal_1)-\tau(1\pal_2))(1\pal_3)-(x-\tau) \,=\, \beta
\label{BetaDef}
\end{equation}
for some $\beta\approx x\al_1 + \tau\al_2 + (x-\tau)\al_3$ that satisfies
\begin{equation}
  |\beta| \,\leqx\, \left\{
    \begin{array}{ll}
      4\eps\hts, & \mathrm{if}\quad |x-\tau| \leq 1, \\
      6\eps\hts, & \mathrm{otherwise} .
    \end{array}
  \right.
\label{BetaEstim}
\end{equation}
From (\ref{BetaDef}), we immediately obtain, that
\begin{equation}
  (x(1\pal_1)-\tau(1\pal_2))(1\pal_3)\, =\, (x-\tau)\Big(1+\frac{\beta}{x-\tau}\Big) .
\label{DenomAlfForm}
\end{equation}
In addition, as $|x|,|\tau|\leq 1$, we have
\begin{equation*}
  |f(x(1\pal_1)) - f(x)| \leq
    \eps D_{1,f} \quad\mathrm{and}\quad |f(\tau(1\pal_2)) - f(\tau)| \leq \eps D_{1,f} ,
\end{equation*}
which, together with (\ref{gEps}), (\ref{DenomAlfForm}), (\ref{BetaEstim}),
and $|x-\tau|\geq 16\eps$, gives
\begin{align*}
  |\fl{g}(x) - g(x) | &\,=\,
       \Bigg| \fl{g}(x) - 
           \frac{f(x)-f(\tau)}{(x-\tau)\Big(1+\frac{\beta}{x-\tau}\Big)}
               \Big(1+\frac{\beta}{x-\tau}\Big) \Bigg|\\[0.0ex]
  &\,=\, \Bigg| \fl{g}(x) - 
        \frac{f(x)-f(\tau)}{(x-\tau)\Big(1+\frac{\beta}{x-\tau}\Big)}
        - \frac{\beta\,{\ds\frac{f(x)-f(\tau)}{x-\tau}}}
               {(x-\tau)\Big(1+\frac{\beta}{x-\tau}\Big)} \Bigg|\\[2.0ex]
  &\,\leq\, \frac{2\hts\eps D_{1,f} + \beta D_{1,f}}{|x-\tau|\Big(1+\frac{\beta}{x-\tau}\Big)}
   \,\leqx\, \frac{8\hts\eps D_{1,f}}{|x-\tau|} .
\end{align*}
\end{proof}

\begin{uw} The round-off error estimates of Lemmas \ref{l1} and \ref{l2}
were obtained under the assumption that $D_{1,f} < \infty$, and they can allow us to
derive a global round-off error estimation for the integral (\ref{CPInt1}) computed
using (\ref{MainFormula}), which does not depend on $\tau$. It is easy to verify that
the inequalities (\ref{hErrEst}) and (\ref{gErrEst}) could have been be replaced
by more precise and much more complex estimates,
\begin{equation}
  | \fl{h}(x) - h(x) | \hts\leqx\, \frac{2\hts\eps}{x}\big(f'(\tau+x)+f'(\tau-x)\big)
                       \,+\, 2\hts\eps\big(\hat{f}(\tau+x,\tau)+\hat{f}(\tau-x,\tau)\big),
\label{hErrEstPrec}
\end{equation}
and
\begin{equation}\hspace*{-1.75ex}
  |\fl{g}(x) - g(x) | \,\leqx \left\{
    \begin{array}{ll}
      \ds\frac{4\hts\eps}{3|x-\tau|}
            \big(f'(x)+f'(\tau)+4\hat{f}(x,\tau)\big), & \mathit{if}\quad |x-\tau| \leq 1, \\[2.5ex]
      \ds\hphantom{3}\frac{\eps}{|x-\tau|}\big(f'(x)+f'(\tau)+6\hat{f}(x,\tau)\big), & \mathit{otherwise} ,
    \end{array}\right.
\vspace{1ex}
\label{gErrEstPrec}
\end{equation}
respectively, where $\hat{f}(x,y)=(x-y)^{-1}\big(f(x)-f(y)\big)$, if, e.g., $f''$ is bounded
in the $2\eps$-neighbourhood of $\hts\tau$, $x$, and $\tau\pm x$. The above inequalities, however,
appear to lead to no reasonably simple and usable global round-off error estimation. Therefore,
in the incoming part of the paper, we shall derive uniform ($\tau$-independent) global
theoretical estimates of the influence of the round-off errors, by applying (\ref{hErrEst})
and (\ref{gErrEst}). Still, in the implementation of the proposed method, we shall
make some use of the inequalities (\ref{hErrEstPrec})--(\ref{gErrEstPrec}).
\end{uw}

In both lemmas we assumed that the values of the function $f$ are computed exactly.
In practice, this is obviously not true. However, if the computation of $f(x)$ for every
$x\in[-1,1]$ is numerically backward stable, i.e.
\begin{equation}
  \fl{f}(x) = f(x(1+\gamma)),
\label{BkStab}
\end{equation}
where $\fl{f}(x)$ denotes the numerically computed value of $f(x)$,
and $|\gamma|\leq \eps K_1$ for some small positive $K_1$, then all the above results
remain true, only the constant factors change. Moreover, regarding the fact that in the
proofs of lemmas \ref{l1} and \ref{l2} we have bounded the errors $|\eps x|$ and $|\eps\tau|$
by $|\eps|$ (as $|x|,|\tau|\leq 1$), we may replace (\ref{BkStab}) with the less
restrictive condition,
\begin{equation*}
  \fl{f}(x) = f(x + \gamma) \qquad (-1\leq x\leq 1),
\label{BkAbsStab}
\end{equation*}
which better reflects the real situations, e.g., if the function $f$ depends on
the shifted argument. One may suggest that it is more practical to assume that
\begin{equation*}
  \fl{f}(x) = f(x + \gamma) (1+\xi) \qquad (-1\leq x\leq 1),
\label{BkAbsStabExt}
\end{equation*}
where $|\xi|\leq \eps K_0$ for some small $K_0>0$. In a similar manner as we have
proved Lemmas \ref{l1} and~\ref{l2}, it can be shown that in the above case the inequalities
(\ref{hErrEst}) and (\ref{gErrEst}) remain true, if the factor $D_{1,f}$ is replaced with
\begin{equation}
  D_{\htbs f} \,:=\, D_{1,f} + \frac{1}{2}(K_1 D_{1,f} + K_0 D_{0,f}) ,
\vspace{-0.5ex}
\label{DfDef}
\end{equation}
where
\begin{equation}
  D_{0,f} = \max_{|x|\leq 1} |f(x)| .
\label{D0Def}
\end{equation}

\subsection{The first approach -- cutting of the singularity} \label{SecErrCut}

From (\ref{hErrEst}) and (\ref{gErrEst}) we immediately obtain
(formally) that\vspace{-1.5ex}
\begin{equation}
\renewcommand{\arraystretch}{2.5}
\begin{array}{rcl}
  \ds E_{\eps,0}(f) \htbbs&:=&\htbbs \ds
  \bigg|\int_{|x-\tau|\geq\delta} \big(\fl{g}(x)-g(x)\big)\hts dx
  + \int_{0}^{\delta} \big(\fl{h}(x)-h(x)\big)\hts dx\bigg| \\[0.5ex]
              \htbbs&\leq&\htbbs \ds
  \int_{|x-\tau|\geq\delta} \big|\fl{g}(x)-g(x)\big|\hts dx
  + \int_{0}^{\delta} \big|\fl{h}(x)-h(x)\big|\hts dx \\[0.5ex]
              \htbbs&\leqx&\htbbs
  \ds\int_{|x-\tau|\geq\delta}\frac{8\hts\eps D_{\htbs f}}{|x-\tau|}\hts dx \,+
     \int_{0}^{\delta}\frac{8\hts\eps D_{\htbs f}}{x}\hts dx \,\leq\,
     8\hts\eps D_{\htbs f} \int_{-1}^{1} \frac{1}{|x-\tau|}\hts dx \,\leq\,
     16\hts\eps D_{\htbs f} \int_{0}^{1} \frac{1}{x}\hts dx ,
\vspace{0.5ex}
\label{CutEst}
\end{array}
\end{equation}
which may not look like a promising formula for approximating the cumulative round-off error.
Thus, it seems quite natural to replace the integral $\int_{0}^{\delta}\htbs h(x)\hts dx$
in (\ref{MainFormula}) by $\int_{\mu}^{\delta}\htbs h(x)\hts dx$
for some very small value of $\mu$ ($0<\mu\leq\delta$).
In this case, we have
\begin{equation}
  E_{\eps,\mu}(f) \,\leqx\, 16\hts\eps D_{\htbs f} \log(\mu^{-1}) \hts .
\label{CutMuEst}
\end{equation}
By $A(\ff,a,b)$ we shall denote an approximated value of the integral (\ref{Int})
computed by some (adaptive) algorithm $A$. We will also define the
theoretical approximation error,
\begin{equation*}
  E_A(\ff,a,b) = \bigg| \int_a^b \ff(x)\hts dx - A(\ff,a,b) \bigg| .
\end{equation*}
Now, from (\ref{MainFormula}), (\ref{CutMuEst}), and
$\big|\htbs\htbs\int_{0}^{\mu} h(x)\hts dx\hts\big|\leq 2\mu D_{1,f}$, we obtain
\begin{equation}
  I_{\tau}(f) \hts=\hts f(\tau)\log\frac{1-\tau}{1+\tau} +
  A(\fl{g},-1,\tau-\delta) + A(\fl{g},\tau+\delta,1) + A(\fl{h},\mu,\delta) + \mathcal{E}_1,
\label{1stAppr}
\end{equation}
where
\begin{equation}
\renewcommand{\arraystretch}{1.4}
\begin{array}{rcl}
  \ds|\mathcal{E}_1| \htbbs&\leqx&\htbbs E_A(\fl{g},-1,\tau-\delta) + E_A(\fl{g},\tau+\delta,1) +
                                         E_A(\fl{h},\mu,\delta)\\
  \ds\htbbs&+&\htbbs 16\hts\eps D_{\htbs f} \log(\mu^{-1}) + 2\mu D_{1,f} .
\label{Appr1Err}
\end{array}
\end{equation}
Because we do not design our method to be used with any particular adaptive
quadrature, there is no way to know the values of the errors $E_A(\cdot,\cdot,\cdot)$
in advance. However, we assume that these errors will be estimated accurately enough
by the adaptive algorithm itself at the time the computations are being performed.

The best value of the parameter $\mu$ in (\ref{Appr1Err}) may be determined
in several ways. One could wish to minimize the error
\begin{equation}
  \mathcal{E}_{\mu} := 16 \hts\eps D_{\htbs f} \log(\mu^{-1}) + 2 \mu D_{1,f} .
\label{muErr}
\end{equation}
Simple computations show that $\mathcal{E}_{\mu}$ reaches its minimum at
$\mu = 8\hts\eps D_{\htbs f} D_{1,f}^{-1}$ (provided that $D_{1,f}\neq 0$).
From the practical point of view, however, this value of $\mu$ is usually
too small. The other reasonable, and our preferred way to determine
the proper value of $\mu$ is to split the error more evenly between
the last two terms of (\ref{Appr1Err}). In our implementation, we
determine the value of $\mu$ as the solution of the equation
\begin{equation}
  16 \hts\eps\log(\mu^{-1}) = 2 \mu .
\label{muCond}
\end{equation}
The above equation is satisfied for $\mu = 8\hts\eps W(\frac{1}{8}\eps^{-1})$, where
$W$ is the Lambert $W$-function (see, e.g., \cite{LambertW}). Subroutines for numerical
evaluation of the values of the Lambert $W$-function can be found in almost every system
for scientific computations. If $\eps=2^{-d}$, then $8\hts\eps W(\frac{1}{8}\eps^{-1})
\simeq (5.555d-9.747\log(d)-5.77)\eps$, with the relative error less than $0.14\%$,
for every $d\in\{20,21,\dots,2000\}$.

\subsection{The second approach -- open-type quadratures} \label{SecErrOpen}

If, for approximating the two integrals on the right hand side of (\ref{MainFormula}), we use
an adaptive quadrature of the open type, then we are guaranteed that $0$ does not belong to
the set of nodes of the quadrature $A(h,0,\delta)$. Observe that, instead
of (\ref{1stAppr}), we may write
\begin{equation}
  I_{\tau}(f) \hts=\hts f(\tau)\,\log\frac{1-\tau}{1+\tau} +
  A(\fl{g},-1,\tau-\delta) + A(\fl{g},\tau+\delta,1) + A(\fl{h},0,\delta) \hts + \hts \mathcal{E}_2\hts ,
\label{OpenAppr}
\vspace{-0.5ex}
\end{equation}
where
\begin{equation}
\renewcommand{\arraystretch}{1.4}
\begin{array}{rcl}
  \ds|\mathcal{E}_2| \htbbs&\leq&\htbbs E_A(g,-1,\tau-\delta) + E_A(g,\tau+\delta,1) + E_A(h,0,\delta)\\
  \ds\htbbs&+&\htbbs \big| \tilde{A}(\fl{g}\htbs-\htbs g,-1,\tau-\delta) +
                     \tilde{A}(\fl{g}\htbs-\htbs g,\tau+\delta,1) +
                     \tilde{A}(\fl{h}\htbs-\htbs h,0,\delta) \big|.
\end{array}
\label{E2Def}
\end{equation}
In the above inequality, the symbols $\tilde{A}(\cdot,\cdot,\cdot)$ denote
the values of some (in general, arbitrary) linear quad\-ratures of the form (\ref{ASumSum}).
For example, $\tilde{A}(\fl{g}\htbs-\htbs g,-1,\tau-\delta)$ is the linear combination
of the form (\ref{ASumSum}), where the subinterval endpoints $s_k$ were
determined automatically when running the adaptive quadrature
$A(g,-1,\tau-\delta)$. Now, setting
\begin{equation}
    \mathcal{E}_o \hts=\hts
      \tilde{A}(\fl{g}\htbs-\htbs g,-1,\tau-\delta) +
      \tilde{A}(\fl{g}\htbs-\htbs g,\tau+\delta,1) +
      \tilde{A}(\fl{h}\htbs-\htbs h,0,\delta) ,\,\,
\label{EoDef}
\end{equation}
analogously to (\ref{CutEst}), we may obtain\vspace{-1ex}
\begin{equation}
   |\mathcal{E}_o| \,\leqx\, 16\hts\eps D_{\htbs f}\hts \tilde{A}(\frac{1}{x},0,1) .
\label{OpenEst}
\vspace{-0.5ex}
\end{equation}
The term $\tilde{A}(x^{-1},0,1)$ is not as pessimistic as the last integral in (\ref{CutEst}),
as it is always finite. As we shall show, the value $\tilde{A}(x^{-1},0,1)$ is not much
larger than $\log(x_{00}^{-1})$, where $x_{00}$ is the smallest node of
$\tilde{A}(x^{-1},0,1)$.

Without loss of generality, we may assume that the quadrature rule $Q$ defined
in (\ref{QDef}) satisfy the obvious condition: $\sum_{j=0}^{m} B_j = 1$.

\begin{lm} \label{l3}
Assume that a quadrature $Q$ defined in (\ref{QDef}) is of the open type
and let $Q(\ff,a,b)$ denote the quadrature $Q$ applied (after linear
transformation) to the interval $[a,b]$, i.e.
\begin{equation*}
  Q(\ff,a,b) = (b-a)\sum\limits_{j=0}^{m} B_j\hts\ff\big(a+(b-a)x_j\big)
   \quad (0<x_0<x_1<\dots<x_m<1) .
\end{equation*}
Then, for every $0\leq a < b$ and $c > 0$
\begin{equation}
  Q(\frac{1}{x},a,b) = Q(\frac{1}{x},ca,cb) ,
\label{QScaleEq}
\vspace{-0.25ex}
\end{equation}
and, if $a>0$,\vspace{-0.25ex}
\begin{equation*}
  \int_{a}^{b}\frac{1}{x}\hts dx = \int_{ca}^{cb}\frac{1}{x}\hts dx .
\vspace{0.5ex}
\end{equation*}
\end{lm}
\begin{proof}
The proof is straightforward.
\end{proof}

\begin{lm} \label{l4}
For every open type quadrature $Q$ of the form (\ref{QDef}) there exists a positive
constant $C_Q$ such that for all $\,0<a<b<1$
\begin{equation*}
  Q(\frac{1}{x},a,b)\, \leq\, C_Q \int_a^b \frac{1}{x}\hts dx .
\end{equation*}
\end{lm}
\begin{proof}
Let us define
\begin{equation*}
  R_Q(a,b) = \frac{Q(x^{-1},a,b)}{\int_a^b x^{-1} dx} .
\vspace{0.5ex}
\end{equation*}
From Lemma \ref{l3} we immediately obtain that
$R_Q(a,b) = R_Q(ab^{-1},1)$,
which further implies:
\begin{equation*}
  C_Q := \sup_{0<a<b<1}R_Q(a,b) = \sup_{0<a<1}R_Q(a,1).
\end{equation*}
As $\lim_{a\to 0}R_Q(a,1) = 0$ and $\lim_{a\to 1}R_Q(a,1) = 1$, we have $C_Q < \infty$.
\end{proof}

\begin{tw} For a linear open type quadrature $Q$ of the form (\ref{QDef}), let us define
\begin{equation*}
  D_Q = \hts\frac{Q(x^{-1},0,1)}{\log(x_0^{-1})} .
\vspace{-0.5ex}
\end{equation*}
If\vspace{-0.5ex}
\begin{equation*}
  \tilde{A}(\cdot,0,1) = \sum_{k=0}^{n-1} Q(\cdot,s_k,s_{k+1})
\end{equation*}
for some $0=s_0<s_1<\dots<s_n=1$ ($n\geq 1$), then
\begin{equation}
  \tilde{A}(\frac{1}{x},0,1) \,\leq\, \max\{C_Q,D_Q\}\log(x_{00}^{-1}) ,
\label{MainOpenEst}
\end{equation}
where $x_{00}$ is the smallest node of $Q(\cdot,s_0,s_1)$.
\end{tw}
\begin{proof}
From (\ref{QScaleEq}), Lemma \ref{l4}, the definition of $D_Q$,
and from $x_{00} = x_0 s_1 \leq x_0 < 1$, we obtain
\begin{align*}
  \tilde{A}(x^{-1},0,1) &\,=\, Q(x^{-1},0,s_1) + \sum_{k=1}^{n-1} Q(x^{-1},s_k,s_{k+1}) \\
    &\, \leq\,  Q(x^{-1},0,1) + C_Q\int_{s_1}^{1}x^{-1}dx
       \,=\,  D_Q\log(x_0^{-1}) + C_Q\log(s_1^{-1}) \\[0.75ex]
    &\,=\, D_Q\log(x_0^{-1}) + C_Q\big(\log(x_{00}^{-1}) - \log(x_0^{-1})\big) \\[1.75ex]
    &\,=\, (D_Q-C_Q)\log(x_0^{-1}) + C_Q\log(x_{00}^{-1})\\[1.75ex]
    &\,\leq\, \max\{C_Q,D_Q\}\log(x_{00}^{-1}) .
\vspace{-1ex}
\end{align*}
\end{proof}

\begin{wn}
By (\ref{EoDef})--(\ref{OpenEst}) and (\ref{MainOpenEst}), the inequality (\ref{E2Def})
may be rewritten as
\begin{equation}
  |\mathcal{E}_2| \,\leq\, E_A(g,-1,\tau-\delta) + E_A(g,\tau+\delta,1) + E_A(h,0,\delta)
    \,+\, |\mathcal{E}_o| ,
\label{FinalOpenEst}
\end{equation}
where
\begin{equation}
  |\mathcal{E}_o| \,\leqx\, 16\hts\eps D_{\htbs f}\max\{C_Q,D_Q\}\log(x_{00}^{-1}) .
\vspace{1ex}
\label{PesOpenEst}
\end{equation}
\end{wn}

For any given quadrature rule $Q$, the value of $D_Q$ can be immediately obtained
by its definition, but there seems to be no general way to compute
the constant $C_Q$. However, it can be found or estimated very easily
for most of important quadrature rules. If the error formula of
a quadrature $Q$ is of the form
\begin{equation*}
  \int_{0}^1 \ff(x) dx - Q(\ff,0,1) \,=\, \mathcal{W} \ff^{(2k)}(\xi_{\ff})
\end{equation*}
($k\in\mathbb{N}$, $0<\xi_{\ff}<1$, $\mathcal{W}>0$),
which is the case, e.g., for all Gauss-Legendre quadratures and
the open type Newton-Cotes rules (cf.\ \cite[\S2.6-\S2.7]{Davis}), then
we immediately obtain that $C_Q = 1$. For a one point rule
\begin{equation*}
  Q(\ff,0,1) = \ff(x_0) \qquad (0<x_0<1),
\end{equation*}
we have $C_Q = R_Q(a,1)$, where $a$ is a solution of the nonlinear
equation $(x_0-1)a^2+(1-2x_0)a+a\log(a)+x_0=0$. If a quadrature $Q$ of the
form~(\ref{QDef}) has positive coefficients and satisfies
\begin{equation*}
  \sum_{j=0}^{k-1} B_j < x_k < \sum_{j=0}^{k} B_j \qquad (0\leq k\leq m) ,
\end{equation*}
i.e.\ $Q$ is a linear combination of one point rules, then the value of $C_Q$ can
be estimated using the most pessimistic term. For example, for the $15$-point Kronrod
extension \cite{Kronrod} of the $7$-point Gauss-Legendre rule, we have
$C_Q<1.06<D_Q\simeq 1.29$.

\subsubsection{The average case} \label{SecErrOpenAvg}

The error estimation (\ref{PesOpenEst}) describes the most
pessimistic case, where all round-off errors cumulate in the final
approximation. This is very unlikely, and therefore it seems advisable
to find some estimation of the arithmetic related errors in the average case.
The problem is that neither the values of the coefficients nor the location
of the adaptive quadrature nodes in (\ref{ASumSum}) are known in advance.
Thus, the precise probabilistic analysis could be extremely complicated.
However, one can try to apply a simplified model of the
influence of round-off errors.

Assume that all coefficients of the quadrature $\tilde{A}$ are equal, and that the
quadrature nodes are uniformly distributed, i.e 
\begin{equation}
  \tilde{A}(x^{-1},0,1) \,=\, \frac{1}{N-1}\sum_{i=1}^{N-1} \frac{N}{i}\hts,
\label{OpenNC}
\end{equation}
where $N$ is the total number of nodes. Observe that in such a case,
$\lim_{N\to\infty}\tilde{A}(x^{-1},0,1)/\log(N)=1$, which is consistent
with (\ref{MainOpenEst}), as for the quadrature (\ref{OpenNC}) we have $C_Q=1$
and $D_Q\to 1$ for $N\to\infty$. Let $U$ denote the continuous random variable
with the uniform distribution in the interval $[-1,1]$. Now, we treat
the error $\mathcal{E}_o$ defined in (\ref{EoDef}) as a random variable,
and -- instead of (\ref{OpenEst}) -- we obtain $\mathcal{E}_o \,=\, \eps D_{\htbs f} X$,
where $\eps D_{\htbs f}$ is the $f$-dependent value of the basic round-off error,
\begin{equation*}
  X \,=\, \frac{N}{N-1}\sum_{i=1}^{N-1} \frac{Y_i}{i} ,
\end{equation*}
is the random variable that corresponds to summing up the errors at all nodes,
and $Y_i$ are independent random variables corresponding to the cumulation of the round-off
errors at each single node. From (\ref{OpenEst}), we already know that $|Y_i|\leq16$ ($1\leq i< N$).
It seems reasonable to assume that for $i=1,2,\dots,N\htbs-1$ we have $Y_i=\sum_{k=1}^{16} U$.
We shall use the standard notation $\mathrm{E}(X)$ and $\mathrm{Var}(X)$ for the mean
value and variance of the random variable $X$. Obviously, $\mathrm{E}(X)=0$.
What we are interested in, however, is the value of $\mathrm{E}(|X|)$.
It can be readily verified that any random variable $X$ satisfies the inequality
$\mathrm{E}(|X-\mathrm{E}(X)|)\leq \sqrt{\mathrm{Var}(X)}$, which, in our case,
simplifies to $\mathrm{E}(|X|)\leq \sqrt{\mathrm{Var}(X)}$. As variances of independent
random variables add (cf.\ \cite[\S1.5]{Bill}), we immediately obtain
\begin{equation*}
  \mathrm{E}(|X|) \,\leq\, \sqrt{\frac{N^2}{(N-1)^2}\sum_{i=1}^{N-1} \frac{\mathrm{Var}(Y_i)}{i^2} } \,=\,
  \frac{4\hts N}{\sqrt{3}(N-1)}\sqrt{\sum_{i=1}^{N-1}\frac{1}{i^2}} \,=\,
  \frac{2\sqrt{2}\pi}{3} + O(N^{-1}) .
\end{equation*}
From the above inequality, it follows that the average error is bounded by the value
whose dependence on $N$ can be practically ignored. The asymptotic distribution of the random
variable $X$ can be given in an explicit form (see \cite{Mitra}) from which we may conclude
that the probability that $|X|>2\sqrt{2}\pi/3$ is (asymptotically) less than $1/3$.
This value is not yet satisfactory, however, by multiplying the right hand side
of the above inequality by $9/2$, we already obtain that
$\mathrm{Pr}(|X|>3\sqrt{2}\hts\pi) < 10^{-5}$, which turns
out to be quite sufficient from the practical point of view.

Generalizing the above result to the case of an arbitrary linear
quadrature $\tilde{A}$, we may expect that with a very high probability
\begin{equation}
  |\mathcal{E}_o| \,\leq\, 3\sqrt{2}\hts\pi\hts\eps D_{\htbs f}\max\{C_Q,D_Q\} .
\label{AvgOpenEst}
\vspace{0.5ex}
\end{equation}

In order to use the inequality (\ref{AvgOpenEst}), we have to assume that the
approximation to an integral computed by the adaptive quadrature is of the linear
form, i.e.\ of the form (\ref{ASumSum}), which is true for most of the adaptive
integration algorithms. Thus, (\ref{AvgOpenEst}) is a valid estimation
of the round-off error $|\mathcal{E}_o|$ in -- practically -- every case,
where an adaptive quadrature of the open type is used. However, computation
of the Cauchy principal value integral (\ref{CPInt1}) by the use of the
formula (\ref{OpenAppr}) also requires a proper error tolerance to be
set when calling the adaptive quadrature subroutine. To this end, we
can use the expression on the right hand side of (\ref{AvgOpenEst}),
but only in the case when the error estimator of the adaptive quadrature
is also of the linear type (is a linear combination of the values of integrand).
Otherwise, it may be necessary to apply the pessimistic formula, i.e.\ the
last term of~(\ref{FinalOpenEst}) (after assuming some reasonable
lower bound for the value of $x_{00}$).

\subsection{Sensitivity to small changes of the parameter $\tau$}

When approximating the value of the integral (\ref{CPInt1})
numerically, we cannot in general assume that $\tau$ has an exact representation
in a given computer arithmetic. That means, that we usually approximate the integral
$I_{\tau(1+\al)}(f)$ instead of $I_{\tau}(f)$, for some small value of $|\al|\leq\eps$.
In some cases the values of these two integrals may be considerably different.

If $|\tau|\approx 1$, then the first term on the right hand side of (\ref{MainFormula})
has the largest influence on the value of the integral (\ref{CPInt1}). It is easily seen that
in this case even a small change of $\tau$ may significantly alter the value of the
integral. Let us denote $L(x) = \log((1-x)/(1+x))$. Then, we have
\begin{equation*}
  | L(\tau(1+\al)) - L(\tau) | \,\simeq\, |\al \tau L'(\tau)|
\end{equation*}
and, consequently,
\begin{equation}
 | I_{\tau(1+\al)}(f) - I_{\tau}(f) | \,\simeq\, \frac{2\hts|\al\tau f(\tau)|}{1-\tau^2}
   \,\leq\, \eps\frac{2\hts|\tau f(\tau)|}{1-\tau^2}.
\vspace{0.5ex}
\label{logErr}
\end{equation}

For the arbitrary value of $\tau\in(-1,1)$, assuming that $f(x+\al)\simeq f(x)$
for $-1<x<1$ and $|\al|\leq\eps$, we have
\begin{equation*}
  \cpint_{-1}^{1}\frac{f(x)}{x-\tau(1+\al)}\,dx \,\simeq\,
  \cpint_{-1-\al\tau}^{1-\al\tau}\frac{f(x)}{x-\tau}\,dx ,
\end{equation*}
which further implies
\begin{equation}
  | I_{\tau(1+\al)}(f) - I_{\tau}(f) |
  \,\simeq\, |\al\tau|\bigg|\frac{f(-1)}{1+\tau} + \frac{f(1)}{1-\tau}\bigg|
  \,\leq\, \eps|\tau|\bigg(\frac{|f(-1)|}{1+\tau} + \frac{|f(1)|}{1-\tau}\bigg) .
\label{tauErr}
\end{equation}
Observe that if $|\tau|\to 1$, then the ratio between the expression on the right hand side
of (\ref{logErr}) and the one of (\ref{tauErr}) approaches 1. In our implementation
we choose the larger of the two above estimates.

There is one more situation, when a small perturbation of $\tau$ may have quite
significant influence on the accuracy of the numerical approximation of the
integral (\ref{CPInt1}). If $|f''(\tau)|$ is very large, then $f'(\tau)$ changes
very rapidly near $\tau$, and a small perturbation of $\tau$ may considerably change
the values of the function $h$ (defined in (\ref{ghDef})) near $0$. Unfortunately,
as the function $f$ is unknown in advance, it seems impossible to find a reasonable
theoretical estimation of the influence of the value $|f''(\tau)|$ on the global error
$|I_{\tau(1+\al)}(f) - I_{\tau}(f)|$. However, we have confirmed experimentally,
that if $|\al|\leq\eps$, then the expression
\begin{equation}
  S\hts \eps\hts |\tau| \sqrt{|f''(\tau)|} ,
\label{der2Err}
\end{equation}
for $S\simeq 10$ is a good estimation of this influence. Note that the existence of
the second derivative $f''(\tau)$ is not required in practice, as we use
the second order divided difference with a very small step instead.

Assume that the function $f$ in (\ref{CPInt1}) is of the form $f(x) = f_c(x) = F(x+c)$ for
some (not large) value of $|c|$, and that $c$ has an inaccurate representation $c\hts(1+\al)$
($|\al|\leq\eps$) in the computer arithmetic. If $f_c(x)\simeq f_c(x+\al c)$ for every
$-1\leq x\leq1$, then we have $I_{\tau}(f_{c(1+\al)})\simeq I_{\tau-\al c}(f_c)$,
and therefore, from the practical point of view, it seems much more reasonable
to consider absolute distortions of the parameter $\tau$ rather than relative.
For the reason described above, in our implementation, we replace the factor
$|\tau|$ in (\ref{logErr}), (\ref{tauErr}), and (\ref{der2Err}) with 1.


\section{Numerical experiments} \label{SecNum}
\setcounter{equation}{0}

We have tested the proposed method for computing Cauchy principal value
integrals (\ref{CPInt1}) in the two popular systems for scientific computations,
{\Maple} and {\Matlab}, using adaptive quadratures available for these systems.

In order to implement the proposed method, a few small remaining gaps have
to be filled. First, we have to decide the values of the constants $K_0$ and $K_1$
in (\ref{DfDef}). Theoretically, there exists no perfect choice, as we do not
know how stable the computation of the values of the function $f$ is. In general,
assuming some reasonable level of stability of the integrand, it is sufficient
to set $K_0=1$ and $K_1=1$,
i.e.\
\begin{equation}
    D_{\htbs f} \,=\,
    \bigg(1+\frac{K_1}{2}\bigg)D_{1,f} + \frac{K_0}{2}D_{0,f} \,=\,
    \frac{3}{2}D_{1,f} + \frac{1}{2}D_{0,f} .
\label{DfSimple}
\end{equation}
We shall make one exception from the above rule for the reasons
explained in Section \ref{SecNumMatlab}.

Another task is to find a simple and effective way of approximating the values
$D_{1,f}$ and $D_{0,f}$ in (\ref{D1Def}) and (\ref{D0Def}). Although computing a good
approximation of $D_{0,f}$ is relatively simple, the computation of $D_{1,f}$ is already
quite difficult and time consuming. On the other hand, from (\ref{hErrEstPrec}) and
(\ref{gErrEstPrec}) we can see that only the values of $f'(x)$ and $\hat{f}(\tau,x)$
for $x$'s close to $\tau$ really matter, as the largest round-off errors appear
in a close neighbourhood of the singularity. Indeed, even the most naive
approximations, $D_{0,f}:=|f(\tau)|$ and $D_{1,f}:=|f'(\tau)|$, lead
to very satisfying practical results. In our implementation of the
proposed method, we use the following substitutes of
(\ref{D1Def}) and (\ref{D0Def}):\vspace{-1.5ex}
\begin{equation}
\begin{array}{l}
  D_{0,f} :=\hts \hat{D}_{0,f}(\tau) =\hts |f(\tau)|\hts ,\\[1ex]
  D_{1,f} :=\hts \hat{D}_{1,f}(\tau) =\hts  \max\bigg\{\big|f'(\tau)\big|;\,
    \omega_i\bigg|\frac{f(\tau\pm \theta_i)-f(\tau)}{\theta_i}\bigg|,
    i=1,2,\dots,j\bigg\},
\end{array}
\label{D01fSimple}
\end{equation}
where $j=1$, $\theta_1=\mu$, $\omega_1=1$, if we use (\ref{1stAppr})--(\ref{Appr1Err}), and
$j=4$, $\theta_{1,2,3,4}=1/41,1/35,1/16,1/11$, $\omega_{1,2,3,4}=2/3,4/7,1/2,1/3$,
if we use (\ref{OpenAppr})--(\ref{EoDef}), and (\ref{PesOpenEst}) or (\ref{AvgOpenEst}).
The value of $f'(\tau)$ is approximated by the second order symmetric divided difference.

The experiments were performed in the $64$-bit versions of \emph{Maple 16}
and \emph{Matlab R2014b}, on the computer powered by the $4$-core processor running
at $3.4$GHz. The subroutines for numerical evaluation of the Cauchy principal value
integrals, which were used to compute the presented numerical results are available
online at \url{http://www.ii.uni.wroc.pl/~pkl/programs/}.

\subsection{Maple} \label{SecNumMaple}

The {\Maple} system is partially capable of computing Cauchy principal value integrals.
According to the system documentation, {\Maple} will be able to evaluate the integral (\ref{CPInt}),
if using some analytical transformation\footnote{The ability of symbolic computations is
{\Maple}'s main feature.} and the symmetry property similar to (\ref{Formula1}), the
integral can be transformed into a proper one. For example, {\Maple} can easily
compute the value of the integral (\ref{CPInt}) for $f(x) = \e^{\hts x}$, but
is unable to evaluate this integral for $f(x) = \e^{-x^2}$, whereas both integrals
are of the same difficulty from the numerical point of view. In other words,
{\Maple} is capable of evaluating the Cauchy principal value
integrals for which analytical expressions are known.

For numerical computations, {\Maple} uses non-standard floating point arithmetic.
The precision of computations may be modified at any time by defining the number of significant
decimal digits that are stored during the computation process. Moreover, many built-in {\Maple}
functions are allowed to temporarily increase the computation accuracy in order to attempt
to evaluate the final result with the relative error less than \tfl{5}{-d},
where $d$ is the requested number of decimal digits.

Following the {\Maple} philosophy, we wrote our subroutine for computing
integrals of the form (\ref{CPInt1}) in such a way that the final result is computed
up to $d$ significant decimal digits, and the relative approximation error is less
than\footnote{In fact, in our implementation, we approximate the integral
with the relative error less than $10^{-d-1}$, and then round the result to
$d$ significant decimal digits. In such a case, in theory, the final relative
error is only less than \tfl{5.1}{-d}.} \tfl{5}{-d}.
Here, it should be noted that in order to achieve an arbitrary accuracy
of the approximation to the integral (\ref{CPInt1}), the parameter $\tau$ has
to be given exactly. This is a serious restriction in general. However, in {\Maple}
all decimal numbers are represented exactly, as also all rational ones given
as a ratio of two integer numbers. In {\Maple}, it is also possible to
define $\tau$ as an unevaluated expression, which will be computed with
a desired accuracy during the computation of the integral (\ref{CPInt1}).

As we could find no detailed information on the {\Maple} built-in adaptive
integration subroutine, we have decided to use the approach described in
Section \ref{SecErrCut}, i.e.\ to exclude a very small neighbourhood of the
singularity point from the interval of integration (see (\ref{1stAppr})--(\ref{Appr1Err})).
In the case of Cauchy principal value integrals, there seems to be no way to estimate
the relative approximation error, unless we know the value of the integral itself.
Therefore, in the \emph{Maple }implementation of the proposed method, a rough
approximation to the integral (\ref{CPInt1}) is computed first. Then, we use
(\ref{muErr}), (\ref{muCond}), (\ref{logErr}), and (\ref{der2Err}) to find
the required precision~$\eps$ for the computation of the integral (\ref{CPInt1}),
so that the final relative approximation error is less than the expected
tolerance\footnote{It should be noted, that due to the {\Maple}
adjustable machine precision, a very simple solution, like e.g.,
$\eps = 10^{-K}$ for $K\geq\log_{10}(\mu^{-2})$, and $\mu=\max\{1,|f'(\tau)|\}^{-1}10^{-d}/4$,
would produce accurate results. In our implementation, the above values are only determined
in a close to the optimal way, which helps in saving some computation time.}.
The possible loss of significant digits that may occur when adding up
the three terms in (\ref{MainFormula}) is also taken care for.

The results of the experiments are gathered in Tables \ref{Tab1}--\ref{Tab4}.
We computed several integrals of the form (\ref{CPInt1}) for
$f\equiv f_i$ ($i=1,2,3,4$), where\vspace{-1ex}
\begin{equation}
\renewcommand{\arraystretch}{1.4}
\begin{array}{l}
f_1(x) = \e^{\hts 4x},\\
f_2(x) = \sinh(x)\cos(3193x),\\
f_3(x) = \e^{-(x+0.5)^2},\\
f_4(x) = \sin\big(8x+\e^{\hts 8x}\big),
\end{array}
\label{f1-4Def}
\end{equation}
and $\tau \in \{-0.7,\,0.11,\,0.55,\,0.99\}$.
For the functions $f_1$ and $f_2$, the accurate value of the integral (\ref{CPInt1})
can be computed analytically. In the case of the two other integrands, we assumed
as accurate the $100$-digit approximations computed using the presented method.
In our experiment, the value of each integral was numerically approximated
up to $d$ significant decimal digits for $d \in \{32,48,64\}$, and
then compared to the accurate result. As we can see, no relative error
exceeded \tfl{5}{-d}. We have performed several hundreds
more experiments of the similar type with a~similar outcome.

\vspace*{-0.25ex}
\begin{table}[!ht]
\begin{center}
\caption{\small
The requested numbers of significant decimal digits and the actual relative
errors of the approximations to the integral (\ref{CPInt1}) for
$f(x) = \e^{\hts 4x}$,
computed in {\Maple} using the method proposed in this paper.
}\label{Tab1}
\renewcommand{\arraystretch}{1.15}
\setlength\tabcolsep{1.5ex}
\vspace{0.9ex}
\begin{tabular}{C{24ex}C{14.875ex}C{14.875ex}C{14.875ex}C{14.875ex}}\hline
${}_{\ds\mathrm{requested~number~of}}$ & \multicolumn{4}{c}{relative approximation error}\\\cline{2-5}
${}^{\ds\mathrm{decimal~digits}}$ & \small{$\tau=-0.7$} & \small{$\tau=0.11$} &
                                    \small{$\tau=0.55$} & \small{$\tau=0.99$}\\\hline
$32$ & \tfl{3.6}{-33} & \tfl{1.1}{-32} & \tfl{5.9}{-33} & \tfl{5.1}{-35}\\
$48$ & \tfl{4.0}{-49} & \tfl{7.0}{-49} & \tfl{1.3}{-48} & \tfl{5.8}{-49}\\
$64$ & \tfl{3.9}{-65} & \tfl{2.2}{-64} & \tfl{6.0}{-66} & \tfl{3.0}{-64}\\
\hline
\end{tabular}
\vspace{-2.5ex}
\end{center}
\end{table}

\begin{table}[!ht]
\begin{center}
\caption{\small
The requested numbers of significant decimal digits and the actual relative
errors of the approximations to the integral (\ref{CPInt1}) for
$f(x) = \sinh(x)\cos(3193x)$,
computed in {\Maple} using the method proposed in this paper.
}\label{Tab2}
\renewcommand{\arraystretch}{1.15}
\setlength\tabcolsep{1.5ex}
\vspace{0.9ex}
\begin{tabular}{C{24ex}C{14.875ex}C{14.875ex}C{14.875ex}C{14.875ex}}\hline
${}_{\ds\mathrm{requested~number~of}}$ & \multicolumn{4}{c}{relative approximation error}\\\cline{2-5}
${}^{\ds\mathrm{decimal~digits}}$ & \small{$\tau=-0.7$} & \small{$\tau=0.11$} &
                                    \small{$\tau=0.55$} & \small{$\tau=0.99$}\\\hline
$32$ & \tfl{1.6}{-32} & \tfl{5.3}{-33} & \tfl{8.9}{-33} & \tfl{1.9}{-32}\\
$48$ & \tfl{1.4}{-48} & \tfl{2.3}{-48} & \tfl{3.3}{-49} & \tfl{9.2}{-49}\\
$64$ & \tfl{1.3}{-64} & \tfl{9.7}{-65} & \tfl{3.1}{-65} & \tfl{1.3}{-64}\\
\hline
\end{tabular}
\vspace{-2.5ex}
\end{center}
\end{table}

\begin{table}[!ht]
\begin{center}
\caption{\small
The requested numbers of significant decimal digits and the actual relative
errors of the approximations to the integral (\ref{CPInt1}) for
$f(x) = \e^{-(x+0.5)^2}$,
computed in {\Maple} using the method proposed in this paper.
}\label{Tab3}
\renewcommand{\arraystretch}{1.15}
\setlength\tabcolsep{1.5ex}
\vspace{0.9ex}
\begin{tabular}{C{24ex}C{14.875ex}C{14.875ex}C{14.875ex}C{14.875ex}}\hline
${}_{\ds\mathrm{requested~number~of}}$ & \multicolumn{4}{c}{relative approximation error}\\\cline{2-5}
${}^{\ds\mathrm{decimal~digits}}$ & \small{$\tau=-0.7$} & \small{$\tau=0.11$} &
                                    \small{$\tau=0.55$} & \small{$\tau=0.99$}\\\hline
$32$ & \tfl{2.6}{-32} & \tfl{8.6}{-33} & \tfl{1.2}{-34} & \tfl{8.8}{-33}\\
$48$ & \tfl{8.0}{-49} & \tfl{1.2}{-48} & \tfl{5.2}{-49} & \tfl{2.3}{-49}\\
$64$ & \tfl{3.1}{-64} & \tfl{4.0}{-65} & \tfl{1.6}{-65} & \tfl{3.9}{-65}\\
\hline
\end{tabular}
\vspace{-2.5ex}
\end{center}
\end{table}

\begin{table}[!ht]
\begin{center}
\caption{\small
The requested numbers of significant decimal digits and the actual relative
errors of the approximations to the integral (\ref{CPInt1}) for
$f(x) = \sin\big(8x+\e^{\hts 8x}\big)$,
computed in {\Maple} using the method proposed in this paper.
}\label{Tab4}
\renewcommand{\arraystretch}{1.15}
\setlength\tabcolsep{1.5ex}
\vspace{0.9ex}
\begin{tabular}{C{24ex}C{14.875ex}C{14.875ex}C{14.875ex}C{14.875ex}}\hline
${}_{\ds\mathrm{requested~number~of}}$ & \multicolumn{4}{c}{relative approximation error}\\\cline{2-5}
${}^{\ds\mathrm{decimal~digits}}$ & \small{$\tau=-0.7$} & \small{$\tau=0.11$} &
                                    \small{$\tau=0.55$} & \small{$\tau=0.99$}\\\hline
$32$ & \tfl{1.7}{-32} & \tfl{3.9}{-34} & \tfl{3.1}{-32} & \tfl{3.0}{-33}\\
$48$ & \tfl{2.3}{-49} & \tfl{4.1}{-49} & \tfl{1.8}{-48} & \tfl{1.9}{-49}\\
$64$ & \tfl{1.5}{-64} & \tfl{1.4}{-64} & \tfl{8.3}{-65} & \tfl{5.2}{-65}\\
\hline
\end{tabular}
\vspace{-2ex}
\end{center}
\end{table}

\subsection{Matlab} \label{SecNumMatlab}

We have performed much more extensive numerical tests of the proposed method in
{\Matlab}. The {\Matlab} system uses the standard double precision floating point
arithmetic with machine epsilon $\eps = 2^{-52} \simeq \mfl{2.22}{-16}$. Our goal
now, as we cannot manipulate the machine precision, is to compute the possibly most
accurate approximation to the integral (\ref{CPInt1}). We also wish to compute
a reliable estimation of the approximation error, a very valuable
information for the user about the accuracy of the approximation.

In the first set of experiments we have used the main\footnote{In the newest versions
of the {\Matlab} system, the main adaptive integration subroutine is called {\tt integral}.
Currently, it uses exactly the same algorithm (based on \cite{Shampine}) as {\quadgk},
the latter, however, offers more user control, and, what is the most important,
reports the estimated bound on the absolute error.} {\Matlab} adaptive quadrature,
{\tt quadgk}. It is based on the idea presented in \cite{Shampine}. The {\quadgk}
algorithm uses the most popular ($15$,$7$) Gauss-Kronrod pair as the basic quadrature
rule. This is the open type quadrature, and thus we can use the formula (\ref{OpenAppr})
for approximating the value of the integral (\ref{CPInt1}). In order to estimate the errors
$E_A(\cdot,\cdot,\cdot)$ in (\ref{E2Def}), we use the approximated error bounds reported
by the {\quadgk} function. As both the basic quadrature rule and the error
estimator in {\quadgk} are of the linear type, we use
(\ref{EoDef}) and (\ref{AvgOpenEst}) with%
\footnote{The {\quadgk} algorithm includes a nonlinear transformation
of intervals of integration in order to annihilate potential weak endpoint
singularities of the integrand (see \cite{Shampine} for more details).
This procedure introduces some additional perturbations of the
integrand and its argument. For that reason, in this particular
case, we increase the values of the constants $K_0$
and $K_1$ in (\ref{DfSimple}) to $K_0=K_1=2$.}
$D_{\htbs f} := 2 \hat{D}_{1,f}(\tau) + \hat{D}_{0,f}(\tau)$,
for estimating the influence of the round-off errors. To estimate
the errors related to possible perturbations of~$\tau$, we apply (\ref{logErr})--(\ref{der2Err}).
The error tolerance for the adaptive quadrature {\quadgk} is automatically controlled
and set to the largest of the four above estimates
(i.e.\ (\ref{AvgOpenEst})--(\ref{der2Err})).

The proposed method for computing Cauchy principal value integrals is compared to the
algorithm presented by Hasegawa and Torii in \cite{HTCauchy}, which is
a very good automatic quadrature scheme for evaluating such type of integrals.
In the algorithm of \cite{HTCauchy}, the function $f$ is interpolated at the
automatically selected number $N$ of Chebyshev points $t_k=\cos(k\pi N^{-1})$ ($k=0,1,\dots,N$).
Then, a Chebyshev series expansion to the last integral in (\ref{Formula2}) is computed by means of
a three-term recurrence relation. The effectiveness of this algorithm strongly depends on the
smoothness of the function $f$. In our implementation of the method of \cite{HTCauchy}, we
determine the optimal value of $N$ basing on the magnitude and the decay rate of the Chebyshev
coefficients of the integrand, in a little different way\footnote{The error estimate presented
in \cite{HTCauchy} does not include the influence of round-off errors and arithmetic
limitations, and may fail in case of smaller error tolerances. Therefore, we had to
find a way to determine which coefficients in the Chebyshev expansion of the
function $f$ may be considerably affected by the round-off errors, adjust
the error tolerance accordingly during the computations, and finally,
remove the probably too inaccurate terms from the above-mentioned
expansion.} compared to the results of \cite{HTCauchy}. The estimation
of the absolute error is computed according to the formulae given in \cite{HTCauchy}.

In Tables \ref{Tab5}-\ref{Tab9} we present the results of
numerical experiments performed for the functions $f_1$, $f_2$ (cf.\ \ref{f1-4Def}),
\begin{equation}
\renewcommand{\arraystretch}{1.5}
\begin{array}{l}
f_5(x) = \frac{1}{100}(x-1.00001)^{-2},\\
f_6(x) = \sqrt{|\cos(44x)|^3},\\
f_7(x) = \sin(\sqrt{1+x})\log(1-x),
\end{array}
\label{f5-7Def}
\end{equation}
and a few selected values of $\tau$. As the accurate values, we have assumed
the 32-decimal-digit results computed in {\Maple}. We compare the absolute errors
of the obtained approximations, the computed error estimations, and the computation
times\footnote{The {\quadgk} adaptive quadrature is implemented in the {\Matlab} system
as a standard \mbox{m-file} (with no hardware optimisation). The high efficiency of the quadrature
was achieved by using fast vectorised operations (the set of integrals over the number of
subintervals is computed at the same time). In our implementation of
the algorithm of \cite{HTCauchy}, we have used the {\Matlab} built-in, hardware
optimised subroutine for computing the Fast Fourier Transform, the most costly
part of the algorithm. Thus, both compared method may be assumed equally optimised,
with a small advantage towards the method of \cite{HTCauchy}.}.
This is no surprise that for the functions $f_1$ and $f_2$ which are analytic and
relatively \emph{nice}, the algorithm of Hasegawa and Torii computes the results faster
than the proposed method. This is typical for adaptive quadratures to work slower than
dedicated methods in the case of regular functions. Only for complicated or not very
smooth integrands, adaptive quadratures show their potential. Here, the situation is
very similar. In case of the functions $f_5$ and $f_6$, the adaptive approach turned
out to be more efficient than its counterpart. There are no significant differences
between the tested methods in the accuracies of the computed approximations
to the integrals, maybe except for the function $f_5$ (Table~\ref{Tab7}).
As the algorithm of \cite{HTCauchy} features the uniform (with respect to $\tau$) error
estimation, the proposed method follows the behaviour of the integrand slightly better
when using (\ref{D01fSimple}). On the other hand, if several integrals with the same
function $f$ but different parameters $\tau$ are to be computed, then a lot of
computations can be saved in the algorithm of Hasegawa and Torii.

\begin{table}[!ht]
\begin{center}
\caption{\small
The comparison of the absolute approximation error ($E_{\tau}$), its estimation
($E^{\mathrm{est}}_{\tau}$), and the efficiency (computation time) of the proposed
method (based on the {\quadgk} adaptive quadrature) with the method of \cite{HTCauchy},
in the case of the integral (\ref{CPInt1}) with
$f(x)=\e^{\hts 4x}$.
The time unit $T =\hts 0.00021s$.
}\label{Tab5}
\renewcommand{\arraystretch}{1.2}
\setlength\tabcolsep{1.5ex}
\vspace{0.9ex}
\begin{tabular}{C{9ex}|C{12.5ex}C{12.5ex}C{9ex}|C{12.5ex}C{12.5ex}C{9ex}}\hline
       & \multicolumn{3}{c|}{The method of \cite{HTCauchy} }
       & \multicolumn{3}{c}{The proposed method}\\\cline{2-7}
${}^{{}^{\ds\tau}}$ & $E_{\tau}$ & [$\hts E^{\mathrm{est}}_{\tau}\hts$] & Time
                    & $E_{\tau}$ & [$\hts E^{\mathrm{est}}_{\tau}\hts$] & Time\\\hline
$-0.22\hpo\hpo$ &  \tfl{3.7}{-14}     &       \uar      &  \uar  & \tfl{1.8}{-15} & \tflb{6.2}{-14} & $4.7T$\\
$\hpm0.667\hpo$ &  \tfl{7.1}{-15}     & \tflb{1.8}{-13} & $1.0T$ & \tfl{7.1}{-15} & \tflb{6.8}{-13} & $4.7T$\\
   $\hpm0.9995$ & \tflu{\,6.1}{-12}\, &       \dar      &  \dar  & \tfl{6.1}{-12} & \tflb{2.1}{-11} & $4.7T$\\\hline
\end{tabular}
\vspace{-2.5ex}
\end{center}
\end{table}

\begin{table}[!ht]
\begin{center}
\caption{\small
The comparison of the absolute approximation error ($E_{\tau}$), its estimation
($E^{\mathrm{est}}_{\tau}$), and the efficiency (computation time) of the proposed
method (based on the {\quadgk} adaptive quadrature) with the method of \cite{HTCauchy},
in the case of the integral (\ref{CPInt1}) with
$f(x)=\sinh(x)\cos(3193x)$.
The time unit $T =\hts 0.0016s$.
}\label{Tab6}
\renewcommand{\arraystretch}{1.2}
\setlength\tabcolsep{1.5ex}
\vspace{0.9ex}
\begin{tabular}{C{9ex}|C{12.5ex}C{12.5ex}C{9ex}|C{12.5ex}C{12.5ex}C{9ex}}\hline
       & \multicolumn{3}{c|}{The method of \cite{HTCauchy} }
       & \multicolumn{3}{c}{The proposed method}\\\cline{2-7}
${}^{{}^{\ds\tau}}$ & $E_{\tau}$ & [$\hts E^{\mathrm{est}}_{\tau}\hts$] & Time
                    & $E_{\tau}$ & [$\hts E^{\mathrm{est}}_{\tau}\hts$] & Time\\\hline
$-0.22\hpo\hpo$ & \tfl{1.8}{-13} &       \uar       &  \uar  & \tfl{7.2}{-14} & [\tfl{7.0}{-12}] & $3.5T$\\
$\hpm0.667\hpo$ & \tfl{5.9}{-13} & [\tfl{2.6}{-12}] & $1.0T$ & \tfl{4.4}{-13} & [\tfl{1.1}{-11}] & $4.6T$\\
$\hpm0.906\hpo$ & \tfl{1.2}{-12} &       \dar       &  \dar  & \tfl{1.0}{-12} & [\tfl{3.0}{-11}] & $3.7T$\\\hline
\end{tabular}
\vspace{-2.5ex}
\end{center}
\end{table}

\begin{table}[!ht]
\begin{center}
\caption{\small
The comparison of the absolute approximation error ($E_{\tau}$), its estimation
($E^{\mathrm{est}}_{\tau}$), and the efficiency (computation time) of the proposed
method (based on the {\quadgk} adaptive quadrature) with the method of \cite{HTCauchy},
in the case of the integral (\ref{CPInt1}) with
$f(x) = \frac{1}{100}(x-1.00001)^{-2}$.
The time unit $T =\hts 0.0027s$.
}\label{Tab7}
\renewcommand{\arraystretch}{1.2}
\setlength\tabcolsep{1.5ex}
\vspace{0.9ex}
\begin{tabular}{C{9ex}|C{12.5ex}C{12.5ex}C{9ex}|C{12.5ex}C{12.5ex}C{9ex}}\hline
       & \multicolumn{3}{c|}{The method of \cite{HTCauchy} }
       & \multicolumn{3}{c}{The proposed method}\\\cline{2-7}
${}^{{}^{\ds\tau}}$ & $E_{\tau}$ & [$\hts E^{\mathrm{est}}_{\tau}\hts$] & Time
                    & $E_{\tau}$ & [$\hts E^{\mathrm{est}}_{\tau}\hts$] & Time\\\hline
$-0.22\hpo\hpo$ & \tfl{8.8}{-8} &      \uar       &  \uar  & \tfl{5.9}{-9} & [\tfl{1.9}{-8}] & $0.67T$\\
$\hpm0.667\hpo$ & \tfl{8.5}{-8} & [\tfl{6.1}{-6}] & $1.0T$ & \tfl{2.0}{-8} & [\tfl{5.1}{-8}] & $0.67T$\\
$\hpm0.906\hpo$ & \tfl{1.7}{-6} &      \dar       &  \dar  & \tfl{6.2}{-8} & [\tfl{2.0}{-7}] & $0.63T$\\\hline
\end{tabular}
\vspace{-2.5ex}
\end{center}
\end{table}

\begin{table}[!ht]
\begin{center}
\caption{\small
The comparison of the absolute approximation error ($E_{\tau}$), its estimation
($E^{\mathrm{est}}_{\tau}$), and the efficiency (computation time) of the proposed
method (based on the {\quadgk} adaptive quadrature) with the method of \cite{HTCauchy},
in the case of the integral (\ref{CPInt1}) with
$f(x)=\sqrt{|\cos(44x)|^3}$.
The time unit $T =\hts 1.1s$.
}\label{Tab8}
\renewcommand{\arraystretch}{1.2}
\setlength\tabcolsep{1.5ex}
\vspace{0.9ex}
\begin{tabular}{C{9ex}|C{12.5ex}C{12.5ex}C{9ex}|C{12.5ex}C{12.5ex}C{9ex}}\hline
       & \multicolumn{3}{c|}{The method of \cite{HTCauchy} }
       & \multicolumn{3}{c}{The proposed method}\\\cline{2-7}
${}^{{}^{\ds\tau}}$ & $E_{\tau}$ & [$\hts E^{\mathrm{est}}_{\tau}\hts$] & Time
                    & $E_{\tau}$ & [$\hts E^{\mathrm{est}}_{\tau}\hts$] & Time\\\hline
$-0.22\hpo\hpo$ & \tfl{5.2}{-12} &       \uar       &  \uar  & \tfl{8.2}{-15} & [\tfl{4.0}{-13}] & $0.0096T$\\
$\hpm0.667\hpo$ & \tfl{9.9}{-12} & [\tfl{1.2}{-11}] & $1.0T$ & \tfl{2.8}{-14} & [\tfl{5.8}{-13}] & $0.0083T$\\
$\hpm0.906\hpo$ & \tfl{1.6}{-12} &       \dar       &  \dar  & \tfl{1.6}{-14} & [\tfl{5.7}{-13}] & $0.0077T$\\\hline
\end{tabular}
\vspace{-2ex}
\end{center}
\end{table}

\begin{table}[!ht]
\begin{center}
\caption{\small
The comparison of the absolute approximation error ($E_{\tau}$), its estimation
($E^{\mathrm{est}}_{\tau}$), and the efficiency (computation time) of the proposed
method (based on the {\quadgk} adaptive quadrature) with the method of \cite{HTCauchy},
in the case of the integral (\ref{CPInt1}) with
$f(x)=\sin(\sqrt{1+x})\log(1-x)$.
The time unit $T \simeq\hts 0.003s$.
}\label{Tab9}
\renewcommand{\arraystretch}{1.18}
\setlength\tabcolsep{1.5ex}
\vspace{0.9ex}
\begin{tabular}{C{9ex}|C{12ex}C{12ex}C{9ex}|C{12ex}C{12ex}C{9ex}}\hline
       & \multicolumn{3}{c|}{The method of \cite{HTCauchy} }
       & \multicolumn{3}{c}{The proposed method}\\\cline{2-7}
${}^{{}^{\ds\tau}}$ & $E_{\tau}$ & [$\hts E^{\mathrm{est}}_{\tau}\hts$] & Time
                    & $E_{\tau}$ & [$\hts E^{\mathrm{est}}_{\tau}\hts$] & Time\\\hline
$\hpm0.667\hpo$ &       \na      &       \uar       &  \uar  & \tfl{1.8}{-15} & [\tfl{9.2}{-14}] & $1T$\\
$\hpm0.906\hpo$ &       \na      &        \na       &  \sna  & \tfl{5.7}{-15} & [\tfl{3.4}{-13}] & $1T$\\
$\hpm0.9995$    &       \na      &       \dar       &  \dar  & \tfl{8.0}{-13} & [\tfl{1.3}{-10}] & $1T$\\\hline
\end{tabular}
\vspace{-2ex}
\end{center}
\end{table}

In the last example presented in Table \ref{Tab5} we have set $\tau=0.9995$,
which caused the error estimation of the algorithm of \cite{HTCauchy} to fail.
The relatively large round-off error appeared during the evaluation of the one before
last term in (\ref{Formula2}). This suggests that the additional error estimates similar
to (\ref{logErr}) and (\ref{tauErr}) may be used to increase reliability of any algorithm
based on the formula similar to (\ref{Formula2}).

The function $\htbs f\equiv f_7$ is not bounded in $[-1,1]$, what is required in the
method of \cite{HTCauchy}. The first derivative of $f_7$ is also unbounded. However, $f{\hts'}$
needs to be bounded only in some neighbourhood of the singularity point~$\tau$ for
the present method to work properly in practice. The results
for $I_{\tau}(f_7)$ are listed in Table \ref{Tab9}.

Now, we shall concentrate only on the method which was proposed in this paper. A few values
reported in Tables \ref{Tab5}--\ref{Tab9} may not be a satisfactory proof of its reliability. First,
we shall verify whether the estimates derived in the paper are really necessary. To this end,
we have computed the integral (\ref{CPInt1}) for (a little irregularly behaving function)
$f\equiv f_8$, where
\begin{equation*}
  f_8(x) = \sin(33x) + \e^{\sin(\e^{4x})} ,
\end{equation*}
according to the formula (\ref{MainFormula}), for $19999$ different, equally distributed values
of $\tau$. Each of the two integrals in (\ref{MainFormula}) was computed using the {\quadgk}
function. The requested absolute error tolerance was manually set to $1000\hts\eps$ (note
that the ordinary integral $\int_{-1}^{1}f_8(x)dx$ is correctly computed by the {\quadgk}
algorithm with the absolute tolerance $5\eps$). As the error estimate, we have used
only the approximated error bounds reported by the {\quadgk} function. The graphs of
the actual absolute error and its estimation obtained in this way are presented in
Figure \ref{sinexp-gk0}. As we can see, for many values of $\tau$ ($3724$ of $19999$)
the error estimation failed, which confirms our suspicions that the round-off
errors that appear during the computation of the Cauchy principal value
integrals may not be properly estimated by adaptive quadrature itself.

\begin{figure}[!ht]
\centering\vspace*{-0.0ex}
\includegraphics[width=14cm,height=5.5cm,angle=0]{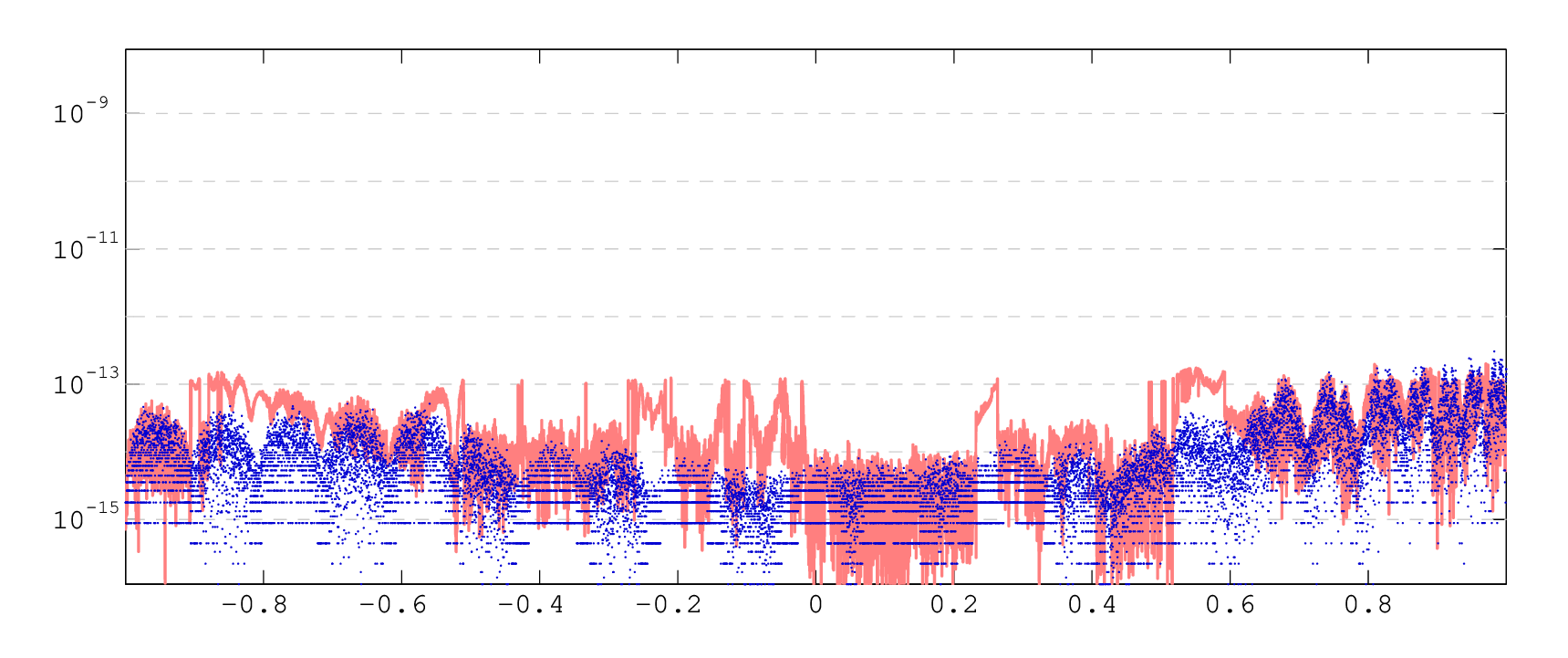}
\vspace{-3.5ex}
\caption{\small The graph of the absolute error (dark blue dots) of the approximation
to the integral (\ref{CPInt1}) for $f(x) = \sin(33x) + \exp(\sin(\e^{4x}))$, computed for
$19999$ equidistant values of $\tau\in(-1,1)$ using only (\ref{MainFormula}) and the adaptive
quadrature {\quadgk} from the {\Matlab} system, and the graph of the numerical
estimation of this error (irregular light red line) computed by the
adaptive quadrature itself.}
\label{sinexp-gk0}
\vspace*{2ex}
\end{figure}

Figure \ref{sinexp-gk} presents the results obtained for the
same integral, but with the tolerance control and the error estimates
(\ref{AvgOpenEst})--(\ref{der2Err}) turned back on. Now, the error is safely estimated,
the gap between the estimation line and the ''cloud'' of actual errors varies from about
$1$ decimal digit (for $\tau$'s near the endpoints) to about $2$ decimal digits
(for $\tau$'s near the centre of the interval). One may fault that the estimation is
not tight enough, if $\tau$ lies in the middle part of the interval. This is because
the computation of values of the function $f_8$ is, despite its complicated
formula, quite stable. Moreover, in this example, the round-off errors that appear
during the computation of values of the functions $g$ and $h$ in (\ref{ghDef})
depend more on $|\eps x|$ than on $\eps$, while the latter is assumed in our error
estimations. To illustrate it more clearly, in the next experiment, we compute
the integrals (\ref{CPInt1}) for $f \equiv f_9$, where
\begin{equation}
  f_9(x) = f_8(j(x)),\quad \mathrm{and}\quad j(x) = \arcsin(\sin(2\pi+x)) .
\label{f9Def}
\end{equation}
Clearly, in theory, $j(x) = x$ for every $x\in[-\pi,\pi]$,
and, consequently, $I_{\tau}(f_9) = I_{\tau}(f_8)$ for each $\tau \in (-1,1)$.
However, the computation of values of the function $f_9$ is much less
stable than in the case of $f_8$. The graph of the absolute error (and its estimation)
of the approximation to the integral (\ref{CPInt1}) with $f\equiv f_9$, computed
using the proposed method, can be found in Figure \ref{sinexp_4e-gk}. The error estimation
reflects the behaviour of the actual error very well. The error nowhere exceeds its
estimation, and the distance between them is greater than $0.1$ of
decimal digit for all values of~$\tau$.

\begin{figure}[!ht]
\centering\vspace*{-0.0ex}
\includegraphics[width=14cm,height=5.5cm,angle=0]{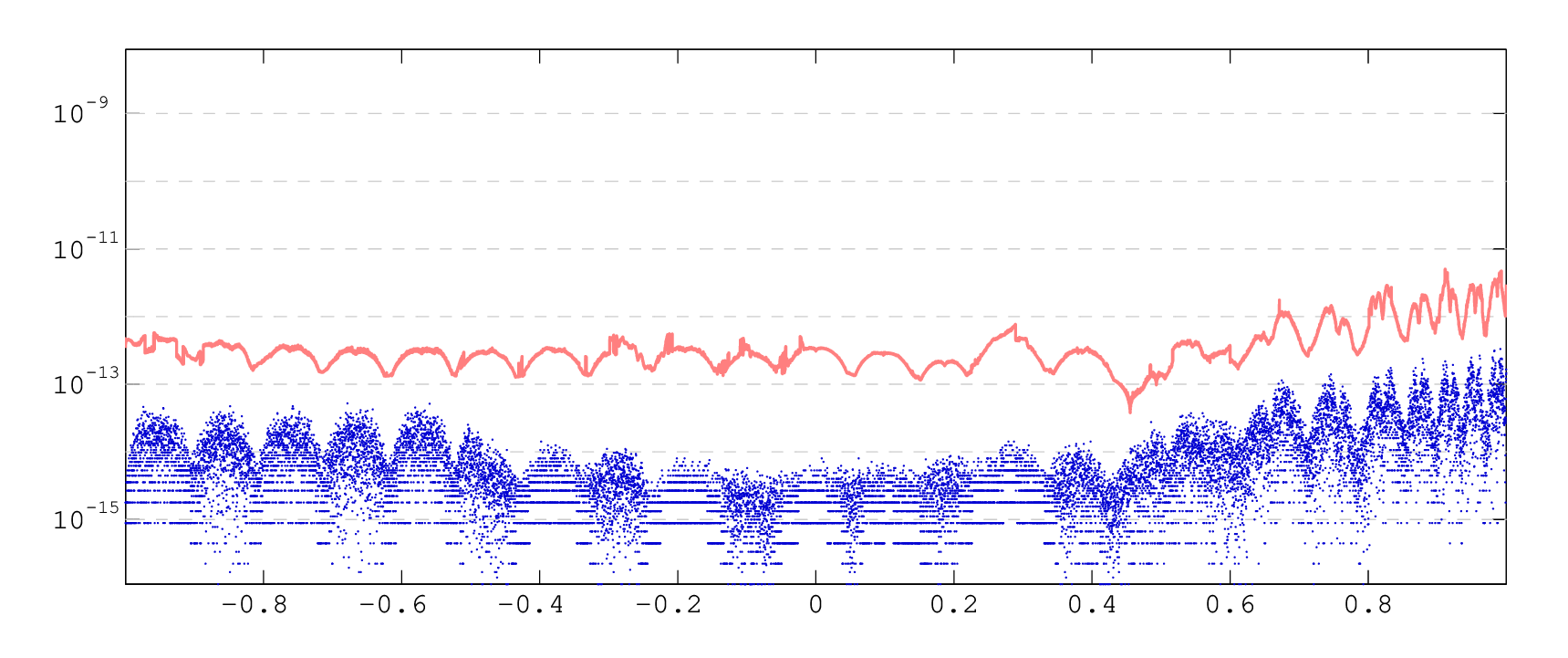}
\vspace{-3.5ex}
\caption{\small The graph of the absolute error (dark blue dots) of the approximation
to the integral (\ref{CPInt1}) for $f(x) = \sin(33x) + \exp(\sin(\e^{4x}))$, computed for
$19999$ equidistant values of $\tau\in(-1,1)$ using (\ref{OpenAppr}) and the adaptive quadrature
{\quadgk} from the {\Matlab} system, and the graph of the numerical estimation of this
error (irregular light red line) computed according to (\ref{E2Def})--(\ref{EoDef}),
(\ref{AvgOpenEst}) with (\ref{D01fSimple}), and (\ref{logErr})--(\ref{der2Err}).}
\label{sinexp-gk}
\vspace*{-2ex}
\end{figure}

\begin{figure}[!ht]
\centering\vspace*{-0.0ex}
\includegraphics[width=14cm,height=5.5cm,angle=0]{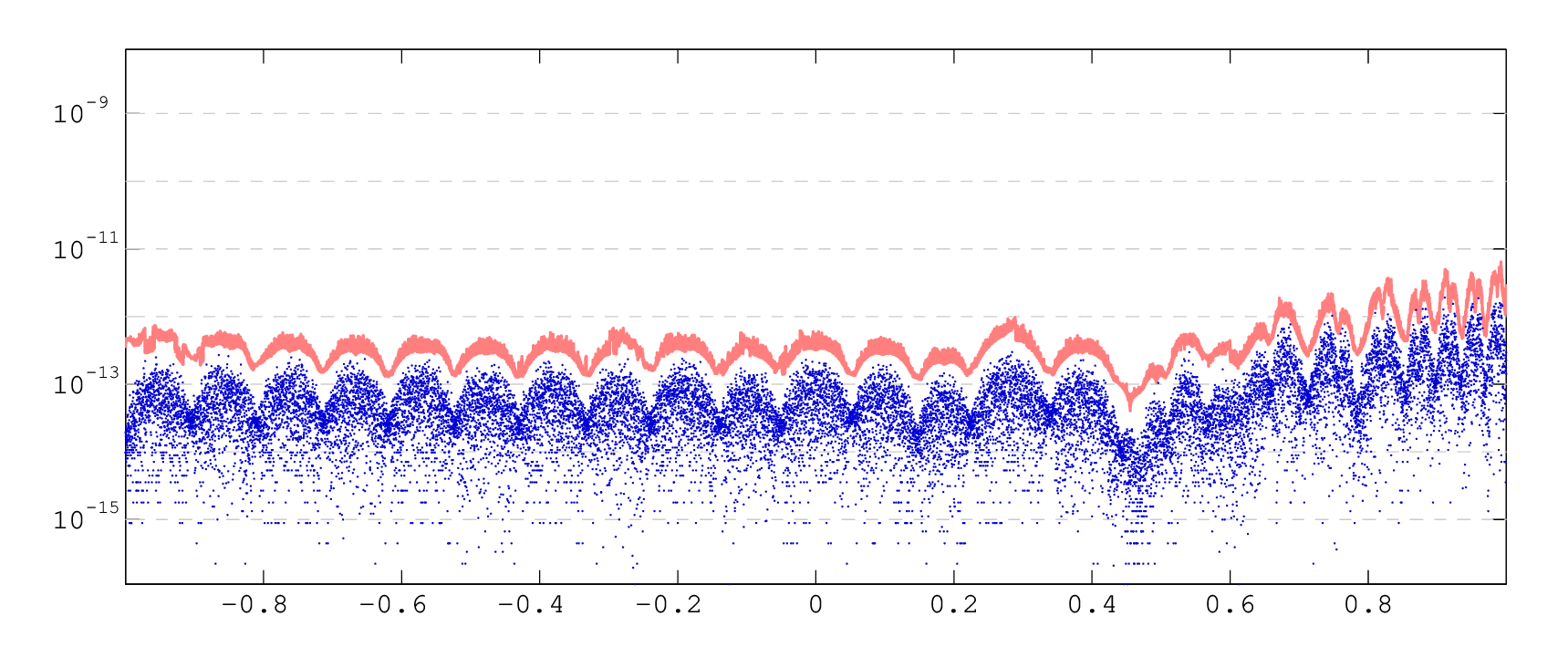}
\vspace{-3.5ex}
\caption{\small The graph of the absolute error (dark blue dots) of the
approximation to the integral (\ref{CPInt1}) for $f(x) = f_8(j(x))$, where
$f_8(x) = \sin(33x) + \exp(\sin(\e^{4x}))$ and $j(x)=\arcsin(\sin(2\pi+x))$, computed for
$19999$ equidistant values of $\tau\in(-1,1)$ using (\ref{OpenAppr}) and the adaptive quadrature
{\quadgk} from the {\Matlab} system, and the graph of the numerical estimation of this
error (irregular light red line) computed according to (\ref{E2Def})--(\ref{EoDef}),
(\ref{AvgOpenEst}) with (\ref{D01fSimple}), and (\ref{logErr})--(\ref{der2Err}).}
\label{sinexp_4e-gk}
\vspace*{2ex}
\end{figure}

In the final test involving the {\quadgk} adaptive quadrature,
we consider once again the function $f_5$ (\ref{f5-7Def}) which is
difficult for two reasons. It is a nearly singular function, and, what
is numerically very dangerous in this example, the constant $1.00001$ does
not have an accurate representation in the double precision arithmetic.
The graphs of the absolute error of the approximation to the integral
$I_{\tau}(f_5)$ and its estimation computed using the proposed
method are presented in Figure \ref{invx2-gk}.

In the case the computation of the values of the function $f$ in (\ref{CPInt1})
is very unstable, the proposed method may fail to compute a reliable error
estimation (as, practically, almost every other method). However, by the
presented examples, we may conclude that the proposed error estimates
seem to be a very reasonable practical choice.

\begin{figure}[!ht]
\centering\vspace*{-0.5ex}
\includegraphics[width=14cm,height=5.5cm,angle=0]{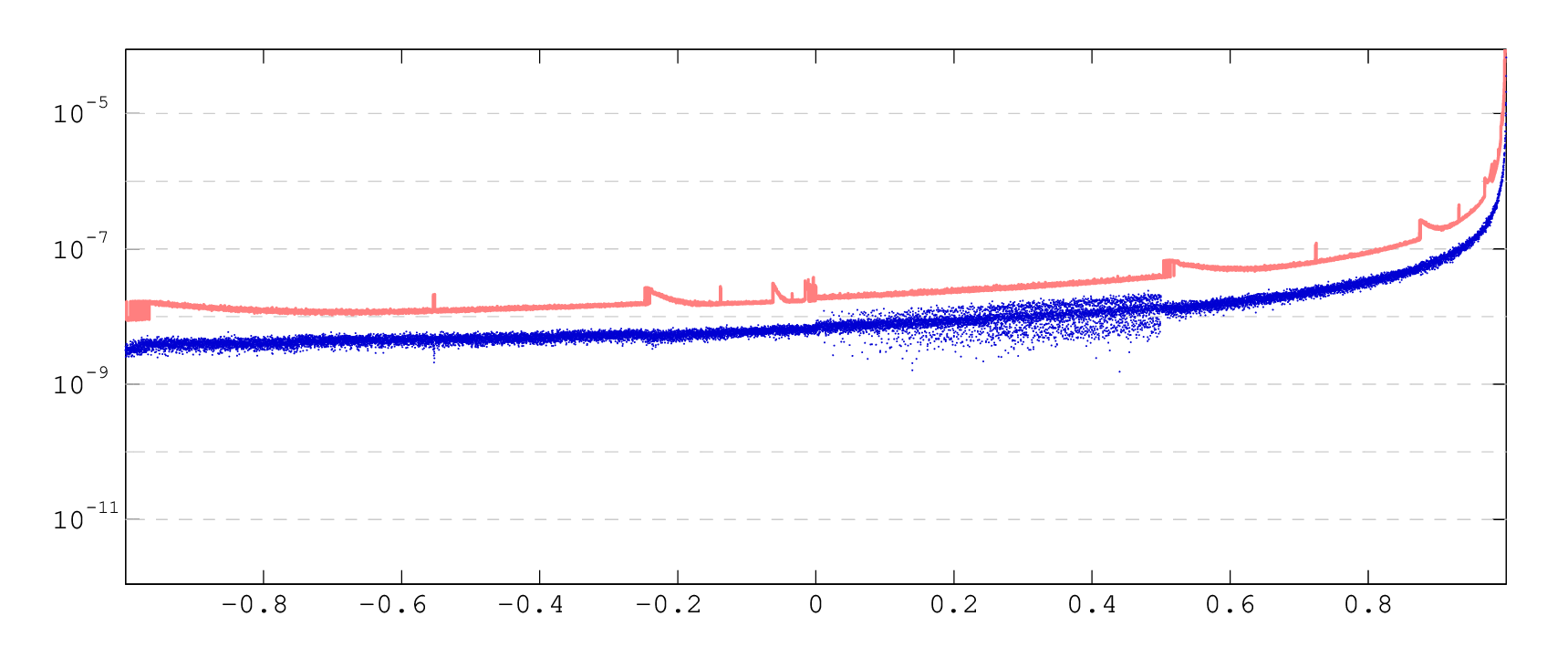}
\vspace{-3.5ex}
\caption{\small The graph of the absolute error (dark blue dots) of the approximation
to the integral (\ref{CPInt1}) for $f(x) = \frac{1}{100}(x-1.00001)^{-2}$, computed
for $19999$ equidistant values of $\tau\in(-1,1)$ using (\ref{OpenAppr}) and the adaptive
quadrature {\quadgk} from the {\Matlab} system, and the graph of the numerical estimation
of this error (irregular light red line) computed according to (\ref{E2Def})--(\ref{EoDef}),
(\ref{AvgOpenEst}) with (\ref{D01fSimple}), and (\ref{logErr})--(\ref{der2Err}).}
\label{invx2-gk}
\vspace*{1.75ex}
\end{figure}

Another adaptive quadrature we have tested our method with, is the {\quadcc}
algorithm based on the results given in \cite{GonnetAlg}. The {\quadcc} function is
not included in {\Matlab}, but can be downloaded from the {\Matlab} \emph{File Exchange}
webpage\footnote{\url{http://www.mathworks.com/matlabcentral/fileexchange/35489-quadcc}}.
The most significant difference between the {\quadgk} and the {\quadcc} adaptive
quadratures is that the latter uses a nonlinear error estimator. In the {\quadcc}
algorithm, the integral (\ref{Int}) (over a single subinterval) is approximated using
the $2^{k}+1$ point Clenshaw-Curtis quadratures for $k=2,3,4,5$. As the estimation
of the quadrature error, the L$^2$-norm of the difference between the integrand
and the corresponding interpolant is used (the norm is approximated by means
of Legendre polynomial expansion). Thus, considering the influence of
round-off errors, a possible perturbation of the integrand at a quadrature
node will more likely increase the value of the {\quadcc} error estimate rather
than partially cancel out with perturbations at other nodes (which
is quite often in case of linear error estimators).
This makes the adaptive quadrature very reliable, but
may cause a considerable efficiency drop if the
error tolerance is set too small.

The {\quadcc} adaptive quadrature includes relative error control, which is
a little inconvenient for us for the reasons described when presenting experiments
in {\Maple}. To solve the problem we may (as in the {\Maple} implementation of our
method) compute rough approximations to the two integrals in (\ref{MainFormula}),
and then use these values to ''convert'' the absolute error tolerance into the
relative one before calling the {\quadcc} subroutine. On the other hand, in
the {\quadcc} algorithm, the verification whether the relative tolerance
is met is done by checking the condition $E \leq T\hts|J|$,
where $E$ is the current approximation of the absolute error, $T$ is the
error tolerance, and $J$ is the most recent approximation of the integral.
Just by modifying this condition to $E \leq T$, we may very easily change the
relative error control to the absolute one. With both above solutions, we obtain
the same numerical results. However, with the latter (which we use in our
experiments) we save some computation time.

Formally, {\quadcc} is the closed type adaptive quadrature,
but it also features the properties of the open type one. This duality
is achieved by removing the node at which an infinite (or undefined)
value of the integrand is encountered from the set of nodes, and
then recalculating the quadrature weights. Therefore, using the
{\quadcc} subroutine, we also can apply the algorithm based on
(\ref{OpenAppr}) for approximating the integral (\ref{CPInt1}).
In the next two experiments, we are going to verify whether our
suspicion that the average case error estimate (\ref{AvgOpenEst})
may not be applicable for controlling the error tolerance of an adaptive
quadrature equipped with a nonlinear error estimator (cf.\ the
discussion at the end of Section \ref{SecErrOpenAvg}).

To this end, we have computed $19999$ values of the integral $I_{\tau}(f_9)$
(cf.\ (\ref{f9Def})) by the similar algorithm as when using the {\quadgk}
function. In the first experiment, the error tolerance for the {\quadcc} subroutine
was controlled by the pessimistic error estimate (\ref{PesOpenEst}) (under the
assumption that $x_{00}>\eps^{1/2}$), in the second one -- by the average case
estimate (\ref{AvgOpenEst})\footnote{As usual, we also include the estimates
(\ref{logErr})--(\ref{der2Err}).}. In both cases, the distribution of the actual
approximation error is very similar, while -- of course -- the computed error
estimation (which is often the only information for the user about the accuracy of the
approximation) is considerably different. The results are presented in Figure~\ref{sinexp_4e-3cc}.

As we could expect, the error tolerance based on the inequality (\ref{AvgOpenEst}) turned
out to be a little too small for the {\quadcc} algorithm. Switching from the pessimistic error
tolerance to the average case one caused a huge efficiency drop. The computation time
increased by\footnote{Note that the function $f_9$ is quite difficult for numerical
computation. In case of simpler integrands the above-mentioned difference
in the computation time is much smaller.} almost $1400\%$ (for comparison, 
when performing similar experiments involving the {\quadgk} subroutine which uses
the linear error estimator, the analogous growth of the computation time was only
$10\%$). However, we have observed that by only slightly releasing the error tolerance,
i.e.\ multiplying the right hand side of (\ref{AvgOpenEst}) by $2$, the increase
of the computation time may be considerably decreased (to only $250\%$ in
the considered example, which seems to be an acceptable price for
obtaining much more tight error estimation).

\begin{figure}[!ht]
\centering\vspace*{-0.0ex}
\includegraphics[width=14cm,height=5.5cm,angle=0]{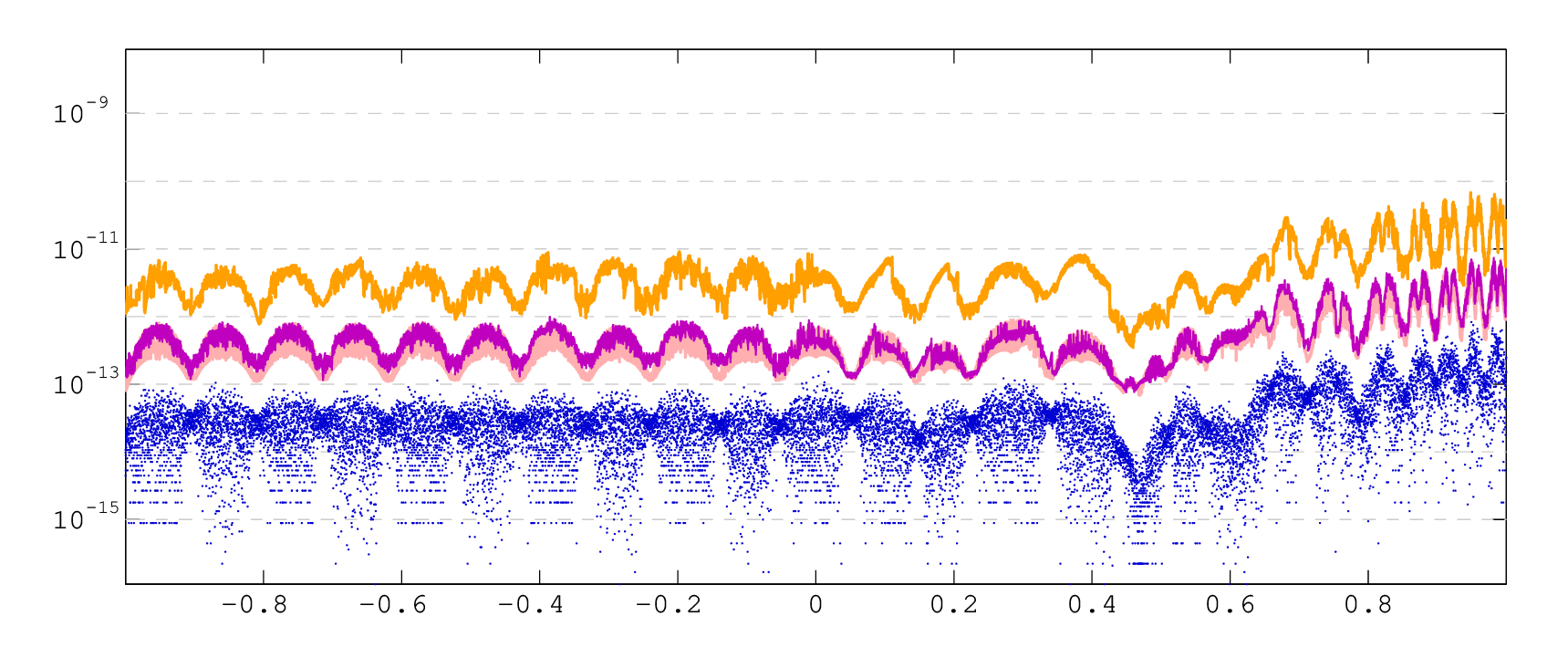}
\vspace{-3.5ex}
\caption{\small The graph of the absolute error (dark blue dots) of the
approximation to the integral (\ref{CPInt1}) with $f(x) = f_8(j(x))$, where
$f_8(x) = \sin(33x) + \exp(\sin(\e^{4x}))$ and $j(x)=\arcsin(\sin(2\pi+x))$, computed
for $19999$ equidistant values of $\tau\in(-1,1)$ using (\ref{OpenAppr}) and the adaptive
quadrature {\quadcc} \cite{GonnetAlg}, and the graph of the error estimation computed
using (\ref{E2Def})--(\ref{EoDef}), (\ref{AvgOpenEst}) with (\ref{D01fSimple}), and
(\ref{logErr})--(\ref{der2Err}) in the case the error tolerance was controlled by:
a) (\ref{PesOpenEst}) (irregular light orange line), b) (\ref{AvgOpenEst})
(irregular pale red line), c) (\ref{AvgOpenEst})$\times 2$ (irregular
purple line). The graph of the actual error corresponds to the case c),
however is very similar in both other cases.}
\label{sinexp_4e-3cc}
\vspace*{2ex}
\end{figure}

The proposed method based on the {\quadcc} adaptive quadrature is considerably
slower than the analogous one that uses the {\quadgk} subroutine (from a few to several
dozen times, depending on the regularity of the integrand). However, the error estimator
of the {\quadcc} subroutine, responsible for estimating the errors $E_A(\cdot,\cdot,\cdot)$
in (\ref{Appr1Err}) or (\ref{E2Def}), is extremely reliable. The proposed algorithm in
connection with the {\quadgk} quadrature offers a very reasonable level of reliability,
but examples can be found\footnote{E.g.,\ for more rapidly oscillating integrands.
Here, it is worth noting that if $f(x)$ in (\ref{CPInt1}) is of the form
$\varphi(x)\sin(cx)$ or $\varphi(x)\cos(cx)$, where $|c|\gg 1$ and $\varphi$
is an analytic (or smooth) function, then dedicated nonadaptive \cite{He},
\cite{Wang}, or automatic \cite{KellerCPOsc} methods are available.},
where the error estimator in the {\quadgk} algorithm fails, causing our method
to return incorrect (too small) error estimation. When testing the proposed
method with the {\quadcc} adaptive quadrature, we did not find an integral
of the form (\ref{CPInt1}) for which the approximation error would have
been falsely estimated\footnote{That does not mean such examples do not
exist. Knowing the nodes of the quadrature $Q$ in (\ref{QDef}), we may always
construct an example for which the adaptive quadrature reports an incorrect
error estimation (cf.\ \cite[\S5.2]{ShampineBook}).}.

While presenting the results of {\Matlab} experiments, we
have concentrated on the method presented in Section \ref{SecErrOpen}.
With any reliable, open or closed type, adaptive quadrature we may successfully
apply the algorithm based on (\ref{1stAppr})--(\ref{Appr1Err}). The results of such
an experiment for the adaptive quadrature {\quadcc} in the case of the integral
$I_{\tau}(f_9)$ are presented in Figure \ref{sinexp_4e-gku} (when applying
the {\quadgk} subroutine, the results are almost identical). The distance between
the approximation error and its estimation is larger than $0.35$ of decimal digit for
all values of $\tau$. As we can also see, the graphs of the error estimation and the actual
error are very regular. This is because the most significant part of the approximation
error is related to the removal of the interval $[\tau-\mu,\tau+\mu]$ from the
interval of integration (the last term in (\ref{Appr1Err})). For the same
reason, the approximation error is larger than the one we obtain when
applying the approach presented in Section \ref{SecErrOpen}.

\begin{figure}[!ht]
\centering\vspace*{-0.0ex}
\includegraphics[width=14cm,height=5.5cm,angle=0]{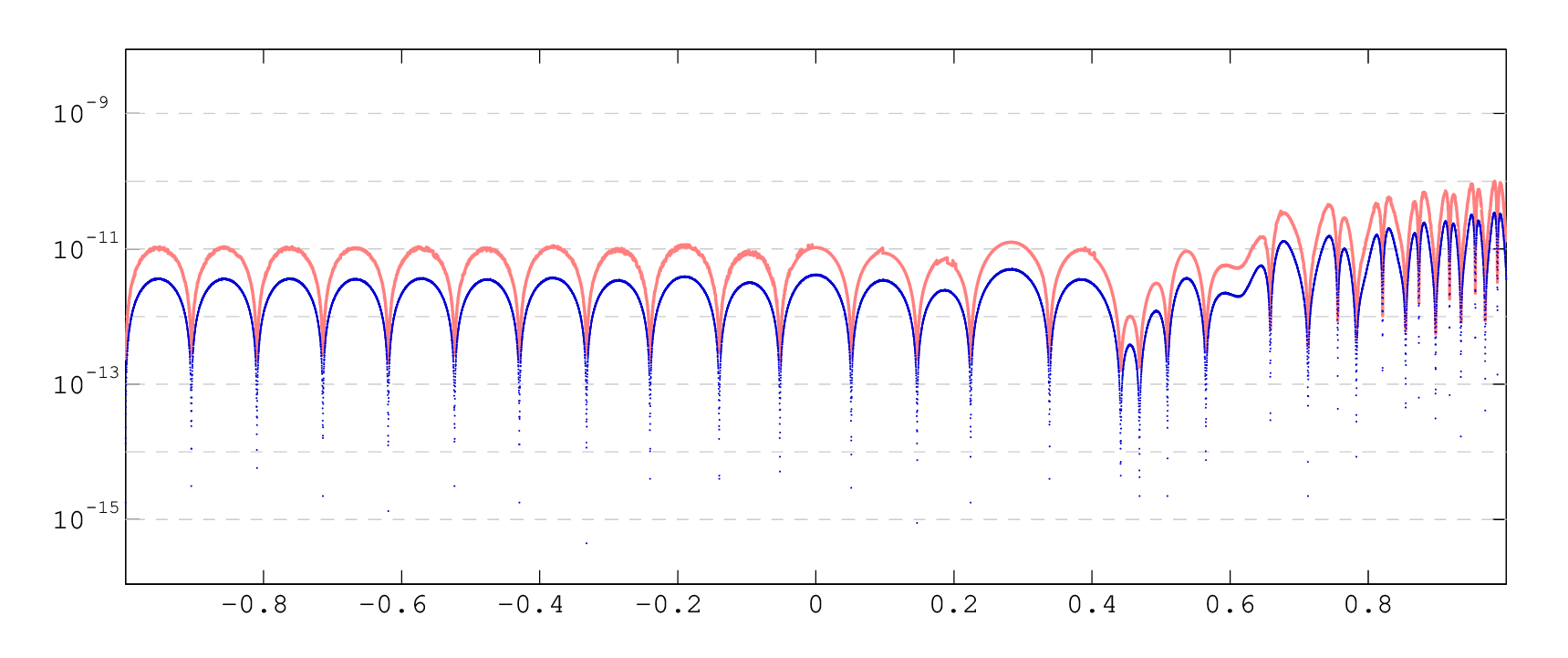}
\vspace{-3.5ex}
\caption{\small The graph of the absolute error (dark blue dots) of the
approximation to the integral (\ref{CPInt1}) with $f(x) = f_8(j(x))$, where
$f_8(x) = \sin(33x) + \exp(\sin(\e^{4x}))$ and $j(x)=\arcsin(\sin(2\pi+x))$,
computed for $19999$ equidistant values of $\tau\in(-1,1)$ using (\ref{1stAppr}) and the 
adaptive quadrature {\quadcc}, and the graph of the error estimation (red solid line)
computed according to (\ref{Appr1Err}) (with $\mu$ satisfying (\ref{muCond})),
(\ref{D01fSimple}), and (\ref{logErr})--(\ref{der2Err}).}
\label{sinexp_4e-gku}
\vspace*{2ex}
\end{figure}

\subsection{The present method versus {\qawc}} \label{SecNumQawc}

In this paper, we have applied a standard (designed for evaluating the integrals
of the form (\ref{Int})) adaptive quadrature to compute the Cauchy principal
value integrals. An adaptive quadrature designed specially for computing
the integrals (\ref{CPInt}) was included in Quadpack \cite{Quadpack}.
The subroutine is called {\qawc} and is still used quite frequently.

The {\qawc} algorithm uses two types of quadratures. The $25$-point generalized
Clenshaw-Curtis quadrature for evaluating the integral in the subinterval
containing the singularity point, and the well known $15$-point Kronrod
extension of the $7$-point Gauss-Legendre quadrature. The algorithm includes
empirical round-off error detection. If the estimated approximation error is
too large compared to the value of the integral in too many subintervals,
the {\qawc} subroutine reports a failure. For the experiment, we
have used the \emph{Matlab} translation of {\qawc} from the
\emph{Slatec} library\footnote{The library can be downloaded from
\url{http://www.mathworks.com/matlabcentral/fileexchange/14535-slatec}.}.

We have tested the {\qawc} adaptive subroutine for three different
functions $f$ in (\ref{CPInt1}): $f\equiv f_5$ (\ref{f5-7Def}),
$f\equiv f_9$ (\ref{f9Def}), and $f\equiv f_{10}(x)=100\hts(x+1/2)^2$.
The error tolerance was set to $\sqrt{\eps}$ in the case of the function $f_5$,
and to $1000\eps$, otherwise. The assumed error tolerances are a little too small to
be reached, so we can verify how well the round-off errors are detected by the {\qawc}
algorithm. The results are not satisfactory (see Figure \ref{fx4-qawc}). For the
integrals $I_{\tau}(f_5)$ and $I_{\tau}(f_{10})$, the approximation error was
falsely estimated for $15171$ and $9056$ of $19999$ values of $\tau$, respectively.
The best results were obtained in the case the integral $I_{\tau}(f_9)$,
the approximation error was estimated incorrectly for only $144$ values of
$\tau$, and in $1694$ cases the {\qawc} function reported a failure. For higher
error tolerances, the results improved, but only in the case of functions $f_5$ and
$f_9$. We have found that, in general, the {\qawc} algorithm computes quite reliable
results, if a correct error tolerance $T_{\qawc}$ is set, and the absolute error is estimated
by $\max\{T_{\qawc},E_{\qawc}\}$ instead of $E_{\qawc}$, where $E_{\qawc}$ corresponds
to the error estimate reported by the {\qawc} function. In the last graph of Figure
\ref{fx4-qawc}, we present the results of the experiment involving the integral
$I_{\tau}(f_9)$ again. This time however, the proper error tolerance
$T_{\qawc}$ was set according to (\ref{AvgOpenEst})%
\footnote{Note that the {\qawc} algorithm reports a failure by testing
the influence of round-off errors locally, in separate (sometimes very small)
subintervals. In this paper, we gave the global estimation of these errors.
In order to prevent the {\qawc} subroutine from reporting a failure which
in fact did not occur, we had to use a little less tight error estimate,
i.e.\ (\ref{AvgOpenEst}) and (\ref{DfSimple}) with $D_{1,f}$ and $D_{0,f}$
defined as in (\ref{D1Def}) and (\ref{D0Def}).}
and (\ref{tauErr})--(\ref{der2Err}), while the approximation
error was estimated as described above. The integral was computed
correctly for all values of $\tau$. With this scheme, the correct
results were also obtained in the case of the other tested integrals.
As far as the efficiency is concerned, the {\qawc} subroutine turned out
to be slower than the proposed method using the {\quadgk} or
the {\quadcc} adaptive quadrature.

\begin{figure}[!ht]
\centering\vspace*{-0.0ex}
\includegraphics[width=14cm,height=5.5cm,angle=0]{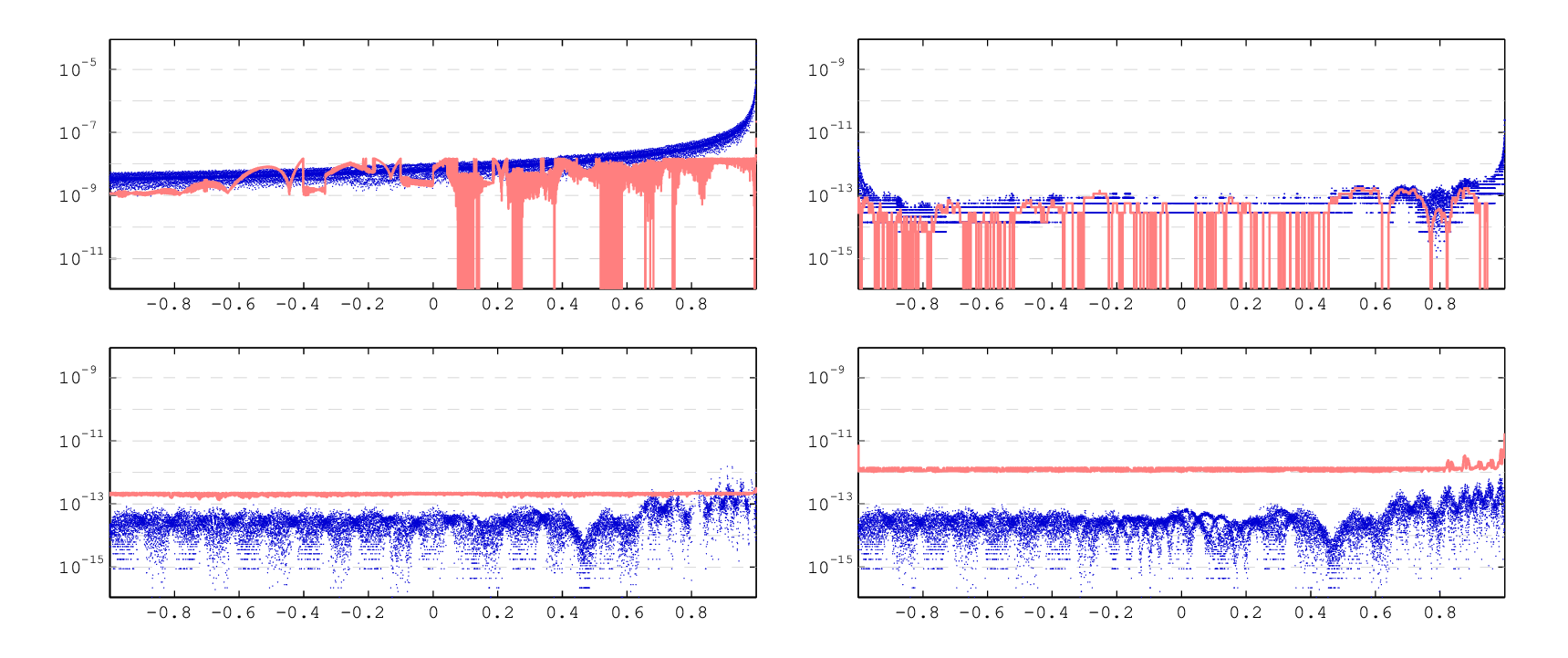}
\vspace{-3.5ex}
\caption{\small The graphs of the absolute error (dark blue dots) of
the approximation to the integral (\ref{CPInt1}), and the graphs
of the error estimation (irregular light red line) computed
by the {\qawc} algorithm from Quadpack \cite{Quadpack} in the case
a)~$f(x)=f_5(x)$ (\ref{f5-7Def}) and the error tolerance $T_{\qawc}=\sqrt{\eps}$ (upper-left graph),
b) $f(x)=100\hts(x+1/2)^2$ and the error tolerance $T_{\qawc}=1000{\eps}$ (upper-right graph),
c) $f(x)=f_9(x)$ (\ref{f9Def}) and the error tolerance $T_{\qawc}=1000{\eps}$ (lower-left graph).
The last graph also presents the results for $f\equiv f_9$, but in the case the error tolerance
and the error estimation of the {\qawc} subroutine were supported by the estimates
proposed in this paper.}
\label{fx4-qawc}
\vspace*{0ex}
\end{figure}

\section{Conclusions and final thoughts} \label{SecFinal}
\setcounter{equation}{0}

We have shown that a fine ordinary adaptive quadrature can be successfully used
for fast and accurate computations of the Cauchy principal value integrals, if the
error tolerance for the adaptive quadrature and the reported error estimation
remain under control of the error estimates proposed in this paper.

The presented method works most efficiently when used with an adaptive quadrature
using a linear error estimator. Such adaptive quadratures, however, may sometimes accidentally
fail to compute a reliable approximation. The probability of such a failure may be considerably
decreased by setting very small error tolerance. On the other hand, if the tolerance is too
small, the result may be quite the opposite, as also the efficiency of the adaptive quadrature
may drop significantly. In the present paper, we give tools that enable the automatic
selection of ''near optimal'' error tolerance, ensuring high level
of reliability and high efficiency of the proposed algorithm.

A small disadvantage of the presented method is that some knowledge on the given
adaptive quadrature is required for a proper selection of constants the proposed
error estimates are based on. In addition, not every adaptive quadrature can be used
together with the proposed method. Such a quadrature should exhibit a stable behaviour
in the presence of round-off errors. We do not require the adaptive quadrature to
correctly estimate the influence of these errors (the error estimates derived in
this paper are meant to do so). The quadrature, however, should not miss the
accurate result by considerably more than the influence
of the round-off errors justifies.











\end{document}